\documentclass[10pt]{article}
\usepackage[skaknew]{skak}
\usepackage{charter}\usepackage[T1]{fontenc}\usepackage{textcomp}
\usepackage[scaled=.92]{helvet}
\usepackage{eulervm,euscript}
\DeclareFontFamily{T1}{pzc}{}
\DeclareFontShape{T1}{pzc}{m}{it}{<-> s * [1.200] pzcmi}{}
\DeclareMathAlphabet{\nicemathcal}{OT1}{pzc}{m}{it}
\DeclareMathAlphabet{\oldmathcal}{OMS}{bch}{m}{n}
\usepackage{mathrsfs}
\linespread{1}
\usepackage{amsmath,amssymb,geometry,theorem}
\usepackage{pstricks,pst-node,pst-text,pst-tree,pstricks-add}
\psset{radius=2pt,xunit=.5cm,yunit=1,arrowscale=1.7,arrowlength=.7,labelsep=3pt,rowsep=2\baselineskip}

\makeatletter
\def\section{\@startsection{section}{1}{0pt}{-3.25ex plus -1ex minus 
-.2ex}{1.5ex plus .2ex minus .3ex}{\normalfont\large\bf}}
\renewcommand\subsection{\@startsection{subsection}{2}{\z@}%
                                     {-3.25ex\@plus -1ex \@minus -.2ex}%
                                     {1.5ex \@plus .2ex}%
                                     {\normalfont\normalsize\bfseries}}
\makeatother
\renewenvironment{abstract}
{\vspace*{-1.8ex}\begin{quotation}\small
}{\end{quotation}}

\newcommand{\defn}[1]{{\textit{\textbf{#1}}}}
\newcommand{\myitem}[1]{\item[\textnormal{(#1)}]}

\theoremheaderfont{\scshape}
\theorembodyfont{\normalfont\slshape}
\theoremstyle{plain}

\newtheorem{theorem}{Theorem}
\theorembodyfont{\normalfont}

\newenvironment{proof}{\begin{trivlist}\item{}\normalfont\textit{Proof.}}{\hfill$\square$\end{trivlist}}

\newcommand{\ie}{\emph{i.e.}}
\newcommand{\eg}{\emph{e.g.}}
\newcommand{\cf}{\emph{cf.}}
\newcommand{\etcet}{\emph{etc.}}

\newdimen\arrayruleHwidth
\setlength{\arrayruleHwidth}{1.3pt}
\makeatletter
\def\Hline{\noalign{\ifnum0=`}\fi\hrule \@height \arrayruleHwidth
  \futurelet \@tempa\@xhline}
\makeatother

%
%
%
%
%
%
%

%
%

%
%

\newdimen\proofrulebreadth \proofrulebreadth=.05em
\newdimen\proofdotseparation \proofdotseparation=1.25ex
\newdimen\proofrulebaseline \proofrulebaseline=2ex
\newcount\proofdotnumber \proofdotnumber=3
\let\then\relax
\def\hfi{\hskip0pt plus.0001fil}
\mathchardef\squigto="3A3B
\newif\ifinsideprooftree\insideprooftreefalse
\newif\ifonleftofproofrule\onleftofproofrulefalse
\newif\ifproofdots\proofdotsfalse
\newif\ifdoubleproof\doubleprooffalse
\let\wereinproofbit\relax
\newdimen\shortenproofleft
\newdimen\shortenproofright
\newdimen\proofbelowshift
\newbox\proofabove
\newbox\proofbelow
\newbox\proofrulename
\def\shiftproofbelow{\let\next\relax\afterassignment\setshiftproofbelow\dimen0 }
\def\shiftproofbelowneg{\def\next{\multiply\dimen0 by-1 }%
\afterassignment\setshiftproofbelow\dimen0 }
\def\setshiftproofbelow{\next\proofbelowshift=\dimen0 }
\def\setproofrulebreadth{\proofrulebreadth}

\def\prooftree{
\ifnum  \lastpenalty=1
\then   \unpenalty
\else   \onleftofproofrulefalse
\fi
\ifonleftofproofrule
\else   \ifinsideprooftree
        \then   \hskip.5em plus1fil
        \fi
\fi
\bgroup
\setbox\proofbelow=\hbox{}\setbox\proofrulename=\hbox{}%
\let\justifies\proofover\let\leadsto\proofoverdots\let\Justifies\proofoverdbl
\let\using\proofusing\let\[\prooftree
\ifinsideprooftree\let\]\endprooftree\fi
\proofdotsfalse\doubleprooffalse
\let\thickness\setproofrulebreadth
\let\shiftright\shiftproofbelow \let\shift\shiftproofbelow
\let\shiftleft\shiftproofbelowneg
\let\ifwasinsideprooftree\ifinsideprooftree
\insideprooftreetrue
\setbox\proofabove=\hbox\bgroup$\displaystyle 
\let\wereinproofbit\prooftree
\shortenproofleft=0pt \shortenproofright=0pt \proofbelowshift=0pt
\onleftofproofruletrue\penalty1
}

\def\eproofbit{
\ifx    \wereinproofbit\prooftree
\then   \ifcase \lastpenalty
        \then   \shortenproofright=0pt  
        \or     \unpenalty\hfil         
        \or     \unpenalty\unskip       
        \else   \shortenproofright=0pt  
        \fi
\fi
\global\dimen0=\shortenproofleft
\global\dimen1=\shortenproofright
\global\dimen2=\proofrulebreadth
\global\dimen3=\proofbelowshift
\global\dimen4=\proofdotseparation
\global\count255=\proofdotnumber
$\egroup  
\shortenproofleft=\dimen0
\shortenproofright=\dimen1
\proofrulebreadth=\dimen2
\proofbelowshift=\dimen3
\proofdotseparation=\dimen4
\proofdotnumber=\count255
}

\def\proofover{
\eproofbit 
\setbox\proofbelow=\hbox\bgroup 
\let\wereinproofbit\proofover
$\displaystyle
}%
\def\proofoverdbl{
\eproofbit 
\doubleprooftrue
\setbox\proofbelow=\hbox\bgroup 
\let\wereinproofbit\proofoverdbl
$\displaystyle
}%
\def\proofoverdots{
\eproofbit 
\proofdotstrue
\setbox\proofbelow=\hbox\bgroup 
\let\wereinproofbit\proofoverdots
$\displaystyle
}%
\def\proofusing{
\eproofbit 
\setbox\proofrulename=\hbox\bgroup 
\let\wereinproofbit\proofusing
\kern0.3em$
}

\def\endprooftree{
\eproofbit 
  \dimen5 =0pt
\dimen0=\wd\proofabove \advance\dimen0-\shortenproofleft
\advance\dimen0-\shortenproofright
\dimen1=.5\dimen0 \advance\dimen1-.5\wd\proofbelow
\dimen4=\dimen1
\advance\dimen1\proofbelowshift \advance\dimen4-\proofbelowshift
\ifdim  \dimen1<0pt
\then   \advance\shortenproofleft\dimen1
        \advance\dimen0-\dimen1
        \dimen1=0pt
        \ifdim  \shortenproofleft<0pt
        \then   \setbox\proofabove=\hbox{%
                        \kern-\shortenproofleft\unhbox\proofabove}%
                \shortenproofleft=0pt
        \fi
\fi
\ifdim  \dimen4<0pt
\then   \advance\shortenproofright\dimen4
        \advance\dimen0-\dimen4
        \dimen4=0pt
\fi
\ifdim  \shortenproofright<\wd\proofrulename
\then   \shortenproofright=\wd\proofrulename
\fi
\dimen2=\shortenproofleft \advance\dimen2 by\dimen1
\dimen3=\shortenproofright\advance\dimen3 by\dimen4
\ifproofdots
\then
        \dimen6=\shortenproofleft \advance\dimen6 .5\dimen0
        \setbox1=\vbox to\proofdotseparation{\vss\hbox{$\cdot$}\vss}%
        \setbox0=\hbox{%
                \advance\dimen6-.5\wd1
                \kern\dimen6
                $\vcenter to\proofdotnumber\proofdotseparation
                        {\leaders\box1\vfill}$%
                \unhbox\proofrulename}%
\else   \dimen6=\fontdimen22\the\textfont2 
        \dimen7=\dimen6
        \advance\dimen6by.5\proofrulebreadth
        \advance\dimen7by-.5\proofrulebreadth
        \setbox0=\hbox{%
                \kern\shortenproofleft
                \ifdoubleproof
                \then   \hbox to\dimen0{%
                        $\mathsurround0pt\mathord=\mkern-6mu%
                        \cleaders\hbox{$\mkern-2mu=\mkern-2mu$}\hfill
                        \mkern-6mu\mathord=$}%
                \else   \vrule height\dimen6 depth-\dimen7 width\dimen0
                \fi
                \unhbox\proofrulename}%
        \ht0=\dimen6 \dp0=-\dimen7
\fi
\let\doll\relax
\ifwasinsideprooftree
\then   \let\VBOX\vbox
\else   \ifmmode\else$\let\doll=$\fi
        \let\VBOX\vcenter
\fi
\VBOX   {\baselineskip\proofrulebaseline \lineskip.2ex
        \expandafter\lineskiplimit\ifproofdots0ex\else-0.6ex\fi
        \hbox   spread\dimen5   {\hfi\unhbox\proofabove\hfi}%
        \hbox{\box0}%
        \hbox   {\kern\dimen2 \box\proofbelow}}\doll%
\global\dimen2=\dimen2
\global\dimen3=\dimen3
\egroup 
\ifonleftofproofrule
\then   \shortenproofleft=\dimen2
\fi
\shortenproofright=\dimen3
\onleftofproofrulefalse
\ifinsideprooftree
\then   \hskip.5em plus 1fil \penalty2
\fi
}





\newcommand{\trans}{\begin{psmatrix}[colsep=2.4ex]\rnode{l}{\rule{0pt}{1.2em}}&\rnode{r}{\rule{0pt}{1.2em}}\ncline[arrows=->,nodesep=2pt]{l}{r}\end{psmatrix}}
\newcommand{\transbetweena}[4]{\ncline[arrows=->]{#1}{#2}\aput(#4){\mbox{\small$#3$}}}
\newcommand{\transbetweenb}[4]{\ncline[arrows=->]{#1}{#2}\bput(#4){\mbox{\small$#3$}}}

\newcommand{\transto}[1]{\begin{psmatrix}[labelsep=1.5pt,colsep=4.5ex]\rnode{l}{\rule{0pt}{1.1em}\,}&\rnode{r}{\,\rule{0pt}{1.1em}}\ncline[arrows=->,nodesep=2pt]{l}{r}\aput(.4){\mbox{\small$#1$}}\end{psmatrix}}


\newcommand{\opp}{\mathsf{O}}
\newcommand{\pla}{\mathsf{P}}

\newcommand{\player}{\pla}

\newcommand{\jptr}[4]{\nccurve[angleA=#3,angleB=#4]{#1}{#2}}

\newcommand{\pto}{\rightharpoonup}

\newrgbcolor{ocolor}{.85 .35 0}
\newrgbcolor{pcolor}{.35 0 .5}

\newcommand{\backtrack}[1]{\widehat{#1}}
\newcommand{\N}{\mathbb{N}}

\newcommand{\Nat}{\N}

\newcommand{\emptyseq}{\varepsilon}

\newcommand{\pbacktrack}[1]{\backtrack{#1}^{\pla}}
\newcommand{\bbgame}[1]{\backtrack{#1}^{\mathsf{B}}}


%

\newcommand{\chess}{\textsf{Chess}}
\newcommand{\oxX}{\textsf{\textbf X}}
\newcommand{\oxO}{\textsf{\textbf O}}
\newcommand{\ox}{\textsf{\oxO's \& \oxX's}}

\newcommand{\simul}[1]{#1\mkern-2mu\to\mkern-2mu#1}

\newcommand{\oxgrid}[9]{
\thicklines\setlength{\unitlength}{2.8pt}%
\begin{picture}(15,15)(-7.5,-7.5)%
\put(-2.5,7.5){\line(0,-1){15}}\put(2.5,7.5){\line(0,-1){15}}\put(-7.5,2.5){\line(1,0){15}}\put(-7.5,-2.5){\line(1,0){15}}%
\put(-5,5){\makebox(0,0){\textsf{\textbf  #1}}}%
\put(0,5){\makebox(0,0){\textsf{\textbf  #2}}}%
\put(5,5){\makebox(0,0){\textsf{\textbf  #3}}}%
\put(-5,0){\makebox(0,0){\textsf{\textbf  #4}}}%
\put(0,0){\makebox(0,0){\textsf{\textbf  #5}}}%
\put(5,0){\makebox(0,0){\textsf{\textbf  #6}}}%
\put(-5,-5){\makebox(0,0){\textsf{\textbf  #7}}}%
\put(0,-5){\makebox(0,0){\textsf{\textbf  #8}}}%
\put(5,-5){\makebox(0,0){\textsf{\textbf  #9}}}%
\end{picture}}

\newcommand{\oxempty}{\oxgrid{}{}{}{}{}{}{}{}{}}
\newcommand{\oxcentrex}{\oxgrid{}{}{}{}{X}{}{}{}{}}

\newcommand{\chessposn}[1]{\chessposnscale{0.26}{#1}}

\newcommand{\chessposnscale}[2]{\psscalebox{#1}{%
\newgame\notationoff\hidemoves{#2} 

\showboard
}}
\newcommand{\chessinvposnscale}[2]{%
\psscalebox{#1}{\newgame\notationoff\hidemoves{#2} 

\showinverseboard
}}

\newcommand{\chesssimulscale}[4]{\begin{psmatrix}[colsep=#4]\\[1ex]\rput(0,.1)
{\rnode{a}{\chessposnscale{#3}{#1}}}&\small$\to$&\rput(0,.1){\rnode{b}{\chessinvposnscale{#3}{#2}}}\\[0ex]\end{psmatrix}}
\newcommand{\invchesssimulscale}[4]{\begin{psmatrix}[colsep=#4]\\[1ex]\rput(0,.1)
{\rnode{c}{\chessinvposnscale{#3}{#1}}}&\small$\to$&\rput(0,.1){\rnode{d}{\chessposnscale{#3}{#2}}}\\[0ex]\end{psmatrix}}
\newcommand{\chesssimul}[2]{\chesssimulscale{#1}{#2}{0.2}{3.8ex}}
\newcommand{\oxsimulscale}[4]{\begin{psmatrix}[colsep=#4]\\[1ex]\rput(0,.1)
{\rnode{x}{\psscalebox{#3}{#1}}}&\small$\to$&\rput(0,.1){\rnode{y}{\psscalebox{#3}{#2}}}\\[0ex]\end{psmatrix}}
\newcommand{\oxsimul}[2]{\oxsimulscale{#1}{#2}{.8}{3.8ex}}

\newcommand{\import}{\,\mbox{\raisebox{-.3ex}{\LARGE$\cdot$}\,}}

\newcommand{\arena}{\mathsf{A}}

\newcommand{\chesssys}{\Delta_{\wmove{N}}}
\newcommand{\chessmov}{M_{\wmove{N}}}

\newcommand{\lambdax}{\pmb{\lambda}}

\newcommand{\dummy}{\star}

\newcommand{\allGG}{\forall G\,.\,G\to G}


\title{\vspace*{-5.5ex}\Large
\textbf{Hypergames and full completeness for system \textit{F}\\ (\sc rough draft)}
\author{\\[-3ex]
\large Dominic J.\ D.\ Hughes
\thanks{Visiting scholar, Computer Science Department, Stanford University, CA 94305.}
\\[1ex]
\normalsize Stanford University\\
\normalsize August 25, 2006}\date{}
}

\begin{document}\thispagestyle{empty}
\maketitle

\begin{abstract}
This paper reviews the fully complete \emph{hypergames} model of
system $F$, presented a decade ago in the author's thesis.
Instantiating type variables is modelled by allowing ``games as
moves''.  The uniformity of a quantified type variable $\forall X$ is
modelled by
\emph{copycat expansion}: $X$ represents an unkown game, a kind of black box,
so all the player can do is copy moves between a positive occurrence
and a negative occurrence of $X$.  

This presentation is based on slides for a talk entitled ``Hypergame
semantics: ten years later'' given at \emph{Games for Logic and
Programming Languages}, Seattle, August 2006.
\end{abstract}

\section{Introduction}

Zwicker's \textsl{\textsf{Hypergame}} \cite{Zwi87} is an alternating
two-player game: one player chooses any
alternating game $G$ which terminates\footnote{Every legal sequence of
moves is finite.} (\eg\ ``\textsf{\textbf{O}'s \& \textbf{X}'s}'' or
\textsf{Chess}\footnote{To ensure termination, assume a draw is forced upon a
threefold repetition of a position (a variant of a standard rule).}),
then play proceeds in $G$.\footnote{The question \mbox{``\textsl{Does
\textsf{Hypergame} terminate?}''}, the \emph{Hypergame paradox}, amounts to a hereditary form of
Russell's paradox, known as Mirimanoff's paradox
\cite{Mir17}: ``\textsl{Is the set of well-founded sets well-founded?}''.  (Each `paradox' is illusory, being merely
due to the lack of formal definition of ``game'' or ``set''.)}

\begin{center}
\pspicture*(-14,-8.3)(15,.3)
\psset{nodesep=5pt,xunit=.25cm,yunit=.57cm}
\rput(0,0){\rnode{root}{\textsl{\textsf{Hypergame}}}}
\rput(14,-3){\rnode{chess}{\chessposn{}}}
\rput(-14,-3){\rnode{empty}{\oxgrid{}{}{}{}{}{}{}{}{}}}
\ncline[arrows=->,nodesepA=3pt,linecolor=ocolor]{root}{chess}
\aput(0.5){\ocolor\textsf{Chess}}
\ncline[arrows=->,nodesepA=3pt,linecolor=ocolor]{root}{empty}
\bput(0.5){\ocolor\textsf{\textbf{O}'s \& \textbf{X}'s}}
\rput(9,-8){\rnode{nf3}{\chessposn{1.Nf3}}}
\ncline[arrows=->,linecolor=pcolor]{chess}{nf3}
\bput(0.5){\pcolor\wmove{Nf3}}
\rput(19,-8){\rnode{e4}{\chessposn{1.e4}}}
\ncline[arrows=->,linecolor=pcolor]{chess}{e4}
\aput(0.5){\pcolor e4}
\rput(14,-13){\rnode{sicilian}{\chessposn{1.e4 c5}}}
\ncline[arrows=->,linecolor=ocolor]{e4}{sicilian}
\bput(0.5){\ocolor c5}
\rput(24,-13){\rnode{alekhine}{\chessposn{1.e4 Nf6}}}
\ncline[arrows=->,linecolor=ocolor]{e4}{alekhine}
\aput(0.5){\ocolor\wmove{Nf6}}
\rput(-9,-8){\rnode{centre}{\oxgrid{}{}{}{}{X}{}{}{}{}}}
\ncline[arrows=->,linecolor=pcolor]{empty}{centre}
\aput(0.5){\pcolor centre}
\rput(-19,-8){\rnode{left}{\oxgrid{}{}{}{X}{}{}{}{}{}}}
\ncline[arrows=->,linecolor=pcolor]{empty}{left}
\bput(0.5){\pcolor left}
\rput(-17,-13){\rnode{x}{\oxgrid{O}{}{}{}{X}{}{}{}{}}}
\ncline[arrows=->,linecolor=ocolor]{centre}{x}
\bput(0.5){\ocolor top-left}
\rput(-9,-13){\rnode{y}{\oxgrid{}{O}{}{}{X}{}{}{}{}}}
\ncline[arrows=->,linecolor=ocolor]{centre}{y}
\aput(0.5){\ocolor top} 
\rput(-1,-13){\rnode{z}{\oxgrid{}{}{}{}{X}{O}{}{}{}}}
\ncline[arrows=->,linecolor=ocolor]{centre}{z}
\aput(0.5){\ocolor right}
\endpspicture
\end{center}
At the \textit{Imperial College Games Workshop} in 1996, the author
illustrated how hypergames --- games in which games can be played as
moves --- can model languages with universal quantification.
Originally implemented in \cite{Hug97} for Girard's system $F$
\cite{Gir71,GLT89}, the idea is quite general, and has been successfully
applied to affine linear logic \cite{MO01,Mur01} and Curry-style type
isomorphisms \cite{deL06}.

\subsection{Universally quantified games}\label{univ-quant-intro}

Recall the little girl Anna-Louise who wins one point out of two in a
``simultaneous display'' against chess world champions Spassky and
Fischer \cite[Theorem~51]{Con76}.
She faces Spassky as Black and Fischer as White, and copies moves back
and forth, indirectly playing one champion against the other.  When
Spassky opens with the Queen's pawn \wmove{d4}, she opens \wmove{d4}
against Fischer; when Fischer responds with the King's knight
\wmove{Nf6}, she responds \wmove{Nf6} against Spassky, and so on.
\begin{center}\vspace{2ex}
\begin{psmatrix}[colsep=5ex,rowsep=2ex]
Fischer && Spassky \\ \\
\rput(0,.1){\rnode{FischerBoard}{\psscalebox{.4}{%
\newgame
\notationoff
\hidemoves{1.d4 Nf6} 

\showboard
}}}
&&
\rput(0,.1){\rnode{SpasskyBoard}{\psscalebox{.4}{%
\newgame
\notationoff
\hidemoves{1.d4 Nf6} 

\showinverseboard
}}}
\end{psmatrix}
\nccurve[ncurv=.82,arrows=->,nodesep=.5ex,angleA=-90,angleB=-90,offsetA=-2ex,offsetB=-2ex]{SpasskyBoard}{FischerBoard}
\bput(0.5){\wmove{d4}}
\nccurve[ncurv=.83,arrows=->,nodesep=.5ex,angleA=-90,angleB=-90,offsetA=-1ex,offsetB=-1ex]{FischerBoard}{SpasskyBoard}
\bput(0.5){\wmove{Nf6}}
\vspace{17ex}

Anna-Louise
\vspace{3ex}\end{center}
We shall write $\simul{G}$ for such a simultaneous display with a game
$G$ (so Anna-Louise played the game $\simul{\chess}$ above, as second
player, against the Fischer-Spassky
team).\footnote{\label{backtracking}Conway writes $-G+G$, or $G-G$
\cite[Chapter\,7]{Con76}.  Later on, we shall add a form of
backtracking to our games so that Anna-Louise may restart the
game with Fischer as many times as she likes, corresponding to the
intuitionism of the arrow $\to$ of system $F$, in which a function may
read its argument any number of times \cite{Lor60,Fel85,Coq91,HO}.
To maintain the focus on universal quantification, here in the
introduction we shall ignore the availability backtracking.}

Observing that her copycat strategy is not specific to chess,
Anna-Louise declares that she will tackle the Fischer-Spassky team in
a more grandiose spectacle: she will give them an additional first
move, to decide the game for simultaneous display.  For example, the
Fischer-Spassky team might choose \textsf{Chess}, thereby opting for
the simultaneous display $\simul{\chess}$,
and play continues as above.
Or they might choose \ox{}, opting for the simultaneous
display $\simul{\ox}$, and open with \oxX{} in the centre of Spassky's
grid; Anna-Louise copies that $\oxX$ accross as her opening move on
Fischer's grid; Fischer responds with \oxO{} in (his) top-left; Anna copies
this $\oxO$ back to Spassky; and so on:
\begin{center}\vspace{2ex}\label{chess-chess}
\begin{psmatrix}[colsep=4ex,rowsep=0ex]
Fischer && Spassky  \\[4ex]
\rput(0,.1){\rnode{FischerBoard}{\psscalebox{1}{\oxgrid{}{}{}{}{X}{}{}{}{O}}}}
&&
\rput(0,.1){\rnode{SpasskyBoard}{\psscalebox{1}{\oxgrid{O}{}{}{}{X}{}{}{}{}}}}
\\[14ex]
&\makebox[0ex]{Anna-Louise}
\\[3ex]
\end{psmatrix}
\nccurve[arrows=->,nodesep=1ex,angleA=-90,angleB=-90,offsetA=-2ex,offsetB=-2ex]{SpasskyBoard}{FischerBoard}
\bput(0.5){\small\textsf{\textbf X} centre}
\nccurve[ncurv=.77,arrows=->,nodesep=1ex,angleA=-90,angleB=-90,offsetA=-1ex,offsetB=-1ex]{FischerBoard}{SpasskyBoard}
\bput(0.5){\small\textsf{\textbf O} top-left}
\end{center}
The key novelty of \cite{Hug97} was 
to define this as a formal game, a \emph{hypergame} or
\emph{universally quantified game},
which we shall write as
$$\allGG$$ 
The tree of $\allGG$ is illustrated below. Similar
in spirit to Zwicker's hypergame\footnote{The author was unaware of
Zwicker's work while preparing \cite{Hug97}, hence the lack of
reference to Zwicker in that paper, and in the author's thesis
\cite{Hug00}.}, it differs in the fact that the first player not
only chooses $G$ but also plays an opening move $m$ in $G$.  We call
such a compound move (importing a game, and playing a move in a game)
a \emph{hypermove}.
\begin{center}\label{Htree}
\pspicture*(-15,-8.3)(15,.3)
\psset{nodesep=5pt,xunit=.25cm,yunit=.57cm}
\rput(0,0){\rnode{root}{$\allGG$}}

\rput(-14,-3){\rnode{chess}{\chesssimul{}{1.d4}}}

\rput(14,-3){\rnode{empty}{\oxsimul{\oxempty}{\oxgrid{}{}{}{}{}{X}{}{}{}}}}
\ncline[arrows=->,nodesepA=3pt,linecolor=ocolor]{root}{chess}
\bput(0.8){\ocolor$\textsf{Chess},\wmove{d4}$}
\ncline[arrows=->,nodesepA=3pt,linecolor=ocolor]{root}{empty}
\aput(0.8){\ocolor$\textsf{\textbf{O}'s \& \textbf{X}'s},\text{left}$}
\rput(-21,-8){\rnode{d4}{\chesssimul{1.d4}{1.d4}}}
\ncline[nodesepA=-.5ex,nodesepB=-2.5ex,arrows=->,linecolor=pcolor]{chess}{d4}
\bput(0.6){\pcolor$\wmove{d4}$}
\rput(-7,-8){\rnode{nf6}{\chesssimul{}{1.d4 Nf6}}}
\ncline[nodesepA=-.8ex,nodesepB=-2.5ex,arrows=->,linecolor=pcolor]{chess}{nf6}
\aput(0.6){\pcolor$\wmove{Nf6}$}
\rput(-24,-13){\rnode{f5}{\chesssimul{1.d4 f5}{1.d4}}}
\ncline[nodesepA=-1ex,nodesepB=-3.7ex,arrows=->,linecolor=ocolor]{d4}{f5}
\bput(0.5){\ocolor$\wmove{f5}$}
\rput(-10,-13){\rnode{d5}{\chesssimul{1.d4 d5}{1.d4}}}
\ncline[nodesepA=1ex,nodesepB=-1ex,arrows=->,linecolor=ocolor]{d4}{d5}
\aput(0.6){\ocolor$d5$}
\rput(10,-8){\rnode{copy}{\oxsimul{\oxgrid{}{}{}{X}{}{}{}{}{}}{\oxgrid{}{}{}{}{}{X}{}{}{}}}}
\ncline[nodesepA=-.5ex,nodesepB=-3.5ex,arrows=->,linecolor=pcolor]{empty}{copy}
\bput(0.52){\pcolor left}
\rput(24,-8){\rnode{right}{\oxsimul{\oxempty}{\oxgrid{}{O}{}{}{X}{}{}{}{}}}}
\ncline[nodesepA=1ex,nodesepB=-2ex,arrows=->,linecolor=pcolor]{empty}{right}
\aput(0.6){\pcolor top}
\rput(6,-13){\rnode{p}{\oxsimul{\oxgrid{}{}{}{}{X}{}{}{}{O}}{\oxcentrex}}}
\ncline[nodesepA=-1ex,nodesepB=-3.5ex,arrows=->,linecolor=ocolor]{copy}{p}
\bput(0.6){\ocolor top-left} 
\rput(20,-13){\rnode{q}{\oxsimul{\oxgrid{}{}{}{O}{X}{}{}{}{}}{\oxcentrex}}}
\ncline[nodesepA=1ex,nodesepB=-1.5ex,arrows=->,linecolor=ocolor]{copy}{q}
\aput(0.7){\ocolor right}
\endpspicture
\end{center}

\subsection{Self-reference (without paradox)}
In the tree above, we have shown two cases for instantiating $G$ in
the hypergame $H\,=\,\allGG$, either to \chess{} or to
\ox.  But it is also possible to instantiate $G$ to a hypergame, or indeed, 
to $H$ itself.  We consider this case below.
The initial state is:
\begin{center}\vspace{1ex}
\begin{psmatrix}[colsep=-2ex,rowsep=0ex]
Fischer && Spassky  \\[2ex]
&$\allGG$
\\[2ex]
&\makebox[0ex]{Anna-Louise}
\\[.2ex]
\;
\end{psmatrix}
\end{center}
Fischer and Spassky begin by importing a game for $G$, in this case,
$H\,=\,\allGG$ itself, yielding a simultaneous display of
$H$:
\begin{center}\vspace{1ex}
\begin{psmatrix}[colsep=-2ex,rowsep=0ex]
Fischer && Spassky  \\[2ex]
&$H$\;\;{\Large$\to$}\;\;$H$
\\[2ex]
&\makebox[0ex]{Anna-Louise}
\\[.5ex]
\;
\end{psmatrix}
\end{center}
In other words, we have:
\begin{center}\vspace{2ex}
\begin{psmatrix}[colsep=4ex,rowsep=0ex]
Fischer && Spassky  \\[3ex]
\rput(0,.1){\rnode{FischerBoard}{$\forall G_1\,.\,G_1\to G_1$}}
&\Large\;\;\;\;\;$\to$\;\;\;\;\;&
\rput(0,.1){\rnode{SpasskyBoard}{$\forall G_2\,.\,G_2\to G_2$}}
\\[3ex]
&\makebox[0ex]{Anna-Louise}
\\[2ex]
\;
\end{psmatrix}
\end{center}
The local bound variable $G$ is renamed in each component to clarify
the evolution of the game below.\footnote{The scope of $\forall G_1$
in the diagram does not extend past the central arrow
{\normalsize$\to$}.  In other words, formally the game played by
Anna-Louise is $(\forall G_1.G_1\to G_1)\,\to\,(\forall
G_2.G_2\to G_2).$}
As in the simultaneous display $\chess\to\chess$, where Spassky opened
with a move on his chessboard, here in $H\to H$ Spassky must complete
the opening hypermove by playing a move on his copy of $H$.  Since
$H\,=\,\forall G_2\,.\,G_2\to G_2$ is a hypergame, opening $H$ requires
importing \emph{another} game, instantiating $G_2$.  Suppose he
chooses \chess{} for $G_2$:
\begin{center}\vspace{2ex}
\hspace*{4ex}\begin{psmatrix}[colsep=4ex,rowsep=0ex]
Fischer && Spassky  \\[6ex]
\rput(0,.1){\rnode{FischerBoard}{$\forall G_1\,.\,G_1\to G_1$}}
&\Large\hspace*{7ex}$\to$\hspace*{7ex}&
\hspace*{4ex}\chesssimulscale{}{}{.45}{8.5ex}
\\[9ex]
&\makebox[0ex]{Anna-Louise}
\\[2ex]
\;
\end{psmatrix}
\end{center}
Now Spassky has his own local simultaneous display $\chess\to\chess$.
To complete his opening (hyper)move on the overall game, he must open
this chess display.  Suppose he plays
\wmove{Nf3} (necessarily on the right board, where it is his turn since he has White):
\begin{center}\vspace{2ex}
\hspace*{4ex}\begin{psmatrix}[colsep=4ex,rowsep=0ex]
Fischer && Spassky  \\[6ex]
\rput(0,.1){\rnode{FischerBoard}{$\forall G_1\,.\,G_1\to G_1$}}
&\Large\hspace*{7ex}$\to$\hspace*{7ex}&
\hspace*{4ex}\chesssimulscale{}{1.Nf3}{.45}{8.5ex}
\\[9ex]
&\makebox[0ex]{Anna-Louise}
\\[2ex]
\;
\end{psmatrix}
\end{center}
Now it is Anna-Louise's turn.  
She has three options: (1) respond to Spassky as Black on
the rightmost chess board, (2) respond to Spassky as White on the
other chess board, or (3) play an opening move against Fischer
in $\forall G_1\,.\,G_1\to G_1$.  
We consider the last case, since it is the most interesting.  Suppose
Anna-Louise chooses to import \ox{} for $G_1$:
\begin{center}\vspace{2ex}
\hspace*{4ex}\begin{psmatrix}[colsep=4ex,rowsep=0ex]
Fischer\hspace*{3.5ex} && Spassky  \\[6ex]
\oxsimulscale{\oxempty}{\oxempty}{1.2}{6.5ex}\hspace*{4ex}
&\Large\hspace*{7ex}$\to$\hspace*{7ex}&
\hspace*{4ex}\chesssimulscale{}{1.Nf3}{.45}{8.5ex}
\\[9ex]
&\makebox[0ex]{Anna-Louise}\;\;\;\;
\\[3ex]
\;
\end{psmatrix}\;\;\;\;\;\;\;\;
\end{center}
Now Fischer has his own local simultaneous display $\ox\to\ox$.
For Anna-Louise to complete her hypermove, she must play a move on
$\ox\to\ox$ (necessarily in the right of the two grids, the one in
which it her turn).
Suppose she plays her $\oxX$ top-right:
\begin{center}\vspace{2ex}
\hspace*{4ex}\begin{psmatrix}[colsep=4ex,rowsep=0ex]
Fischer\hspace*{3.5ex} && Spassky  \\[6ex]
\oxsimulscale{\oxempty}{\oxgrid{}{}{X}{}{}{}{}{}{}}{1.2}{6.5ex}\hspace*{4ex}
&\Large\hspace*{7ex}$\to$\hspace*{7ex}&
\hspace*{4ex}\chesssimulscale{}{1.Nf3}{.45}{8.5ex}
\\[9ex]
&\makebox[0ex]{Anna-Louise}\;\;\;\;
\\[3ex]
\;
\end{psmatrix}\;\;\;\;\;\;\;\;
\end{center}
Fischer responds either with an $\oxO$ in the same grid, or with an
$\oxX$ in the empty grid, and play continuous in the two local
simultaneous displays, $\ox\to\ox$ against Fischer and
$\chess\to\chess$ against Spassky.

But to remain consistent with her copycat strategy, Anna-Louise must
mimic Spassky.  Instead of importing \ox{} for $G_1$ against Fischer,
she must import \chess{} and open with the White move
\wmove{Nf3}, exactly as Spassky did:
\begin{center}\vspace{2ex}
\hspace*{4ex}\begin{psmatrix}[colsep=4ex,rowsep=0ex]
Fischer\hspace*{3.5ex} && Spassky  \\[6ex]
\invchesssimulscale{}{1.Nf3}{.45}{8.5ex}\hspace*{4ex}
&\Large\hspace*{7ex}$\to$\hspace*{7ex}&
\hspace*{4ex}\chesssimulscale{}{1.Nf3}{.45}{8.5ex}
\\[15ex]
&\makebox[0ex]{Anna-Louise}\;\;\;\;
\\[2ex]
\;
\end{psmatrix}\;\;\;\;\;\;\;\;%
\nccurve[ncurv=.35,arrows=<-,nodesep=.5ex,angleA=-60,angleB=-120]{d}{b}\aput(0.5){$\chess$\:,\:\wmove{Nf3}}
\end{center}
Fischer might now open his other board with \wmove{e4}, which
Anna-Louise would copy back to the corresponding board against
Spassky:
\begin{center}\vspace{2ex}
\hspace*{4ex}\begin{psmatrix}[colsep=4ex,rowsep=0ex]
Fischer\hspace*{3.5ex} && Spassky  \\[6ex]
\invchesssimulscale{1.e4}{1.Nf3}{.45}{8.5ex}\hspace*{4ex}
&\Large\hspace*{7ex}$\to$\hspace*{7ex}&
\hspace*{4ex}\chesssimulscale{1.e4}{1.Nf3}{.45}{8.5ex}
\\[14ex]
&\makebox[0ex]{Anna-Louise}\;\;\;\;
\\[1ex]
\;
\end{psmatrix}\;\;\;\;\;\;\;\;%
\nccurve[ncurv=.35,arrows=->,nodesep=.5ex,angleA=-60,angleB=-120]{c}{a}\aput(0.5){\wmove{e4}}%
\end{center}
Or perhaps Fischer responds with Black in the rightmost of his pair
of boards, with \wmove{d5}, which Anna-Louise copies to Spassky:
\begin{center}\vspace{2ex}
\hspace*{4ex}\begin{psmatrix}[colsep=4ex,rowsep=0ex]
Fischer\hspace*{3.5ex} && Spassky  \\[6ex]
\invchesssimulscale{}{1.Nf3 d5}{.45}{8.5ex}\hspace*{4ex}
&\Large\hspace*{7ex}$\to$\hspace*{7ex}&
\hspace*{4ex}\chesssimulscale{}{1.Nf3 d5}{.45}{8.5ex}
\\[14ex]
&\makebox[0ex]{Anna-Louise}\;\;\;\;
\\[1ex]
\;
\end{psmatrix}\;\;\;\;\;\;\;\;%
\nccurve[ncurv=.35,arrows=->,nodesep=.5ex,angleA=-60,angleB=-120]{d}{b}\aput(0.5){\wmove{d5}}%
\end{center}
Either way, she continues to copy moves between the four boards
according to the following geometry of copycat links:
\begin{center}\vspace{1ex}
\hspace*{4ex}\begin{psmatrix}[colsep=4ex,rowsep=0ex]
Fischer\hspace*{3.5ex} && Spassky  \\[6ex]
\invchesssimulscale{}{1.Nf3}{.45}{8.5ex}\hspace*{4ex}
&\Large\hspace*{7ex}$\to$\hspace*{7ex}&
\hspace*{4ex}\chesssimulscale{}{1.Nf3}{.45}{8.5ex}
\\[14ex]
&\makebox[0ex]{Anna-Louise}\;\;\;\;
\\[1ex]
\;
\end{psmatrix}\;\;\;\;\;\;\;\;%
\nccurve[ncurv=.35,arrows=<->,nodesep=.5ex,angleA=-60,angleB=-120]{c}{a}%
\nccurve[ncurv=.35,arrows=<->,nodesep=.5ex,angleA=-60,angleB=-120]{d}{b}%
\end{center}
This copycat strategy corresponds to the polymorphic identity system
$F$ term $$\Lambda G.\lambda g^G.g$$ of type $\forall G\,.\,G\to G\,$.

\subsection{Uniformity}

Consider again the original Fischer-Spassky simultaneous display, with
chess.  Add Kasparov to the team, playing Black.
\begin{center}\vspace{2ex}
\begin{psmatrix}[colsep=5ex,rowsep=2ex]
Kasparov && Fischer && Spassky  \\ \\
\rput(0,.1){\rnode{KasparovBoard}{\psscalebox{.45}{%
\newgame
\notationoff

\showboard
}}}\,
&&
\rput(0,.1){\rnode{FischerBoard}{\psscalebox{.45}{%
\newgame
\notationoff

\showboard
}}}\;
&&
\rput(0,.1){\rnode{SpasskyBoard}{\psscalebox{.45}{%
\newgame
\notationoff

\showinverseboard
}}}%
\end{psmatrix}
\vspace{9ex}

Anna-Louise
\vspace{3ex}\end{center}
Anna-Louise has two distinct ways to guarantee picking up a point.
Either she copies moves between Spassky and Fischer, as before, while
ignoring Kasparov (never playing a move against him),
\begin{center}\vspace{2ex}
\begin{psmatrix}[colsep=5ex,rowsep=2ex]
Kasparov && Fischer && Spassky  \\ \\
\rput(0,.1){\rnode{KasparovBoard}{\psscalebox{.45}{%
\newgame
\notationoff

\showboard
}}}\,
&&
\rput(0,.1){\rnode{FischerBoard}{\psscalebox{.45}{%
\newgame
\notationoff
\hidemoves{1.d4 Nf6} 

\showboard
}}}\;
&&
\rput(0,.1){\rnode{SpasskyBoard}{\psscalebox{.45}{%
\newgame
\notationoff
\hidemoves{1.d4 Nf6} 

\showinverseboard
}}}
\\[16ex]
&&\makebox[0ex][c]{Anna-Louise}
\\[.5ex]
\;
\end{psmatrix}
\nccurve[ncurv=.82,arrows=->,nodesep=.5ex,angleA=-90,angleB=-90,offsetA=-2ex,offsetB=-2ex]{SpasskyBoard}{FischerBoard}
\bput(0.5){\wmove{d4}}
\nccurve[ncurv=.83,arrows=->,nodesep=.5ex,angleA=-90,angleB=-90,offsetA=-1ex,offsetB=-1ex]{FischerBoard}{SpasskyBoard}
\bput(0.5){\wmove{Nf6}}
\end{center}
or she copies moves between Spassky and Kasparov, ignoring Fischer:
\begin{center}\vspace{2ex}
\begin{psmatrix}[colsep=5ex,rowsep=2ex]
Kasparov && Fischer && Spassky  \\ \\
\rput(0,.1){\rnode{KasparovBoard}{\psscalebox{.45}{%
\newgame
\notationoff
\hidemoves{1.e4 c5} 

\showboard
}}}\,
&&
\rput(0,.1){\rnode{FischerBoard}{\psscalebox{.45}{%
\newgame
\notationoff

\showboard
}}}\;
&&
\rput(0,.1){\rnode{SpasskyBoard}{\psscalebox{.45}{%
\newgame
\notationoff
\hidemoves{1.e4 c5} 

\showinverseboard
}}}
\\[16ex]
&&\makebox[0ex][c]{Anna-Louise}
\\[.5ex]
\;
\end{psmatrix}
\nccurve[ncurv=.4,arrows=->,nodesep=1.6ex,angleA=-110,angleB=-70,offsetA=-1ex,offsetB=-1ex]{SpasskyBoard}{KasparovBoard}
\bput(0.5){\wmove{e4}}
\nccurve[ncurv=.5,arrows=->,nodesep=.5ex,angleA=-70,angleB=-110,offsetA=-2ex,offsetB=-2ex]{KasparovBoard}{SpasskyBoard}
\bput(0.5){\wmove{c5}}
\end{center}
We shall write this triple simultaneous display as
$\chess\,\to\,\chess\,\to\,\chess$, and more generally, for any game
$G$, as $G\to G\to G$.\footnote{Again with the backtracking caveat: see footnote~\ref{backtracking}.}

Now consider the universally quantified form of this game, the hypergame
$$\forall G\,.\,G\to G\to G.$$
As with $\allGG$ discussed above, the
Kasparov-Fischer-Spassky team, KFS, now has the right to choose the
game of the triple simultaneous display, as part of their opening
(hyper)move.
We shall say that Anna-Louise's strategy is \defn{uniform} in this
setting if
\begin{itemize}
\item irrespective of the game chosen by KFS, she always ignores the same player, Kasparov or Fischer.
\end{itemize}
Otherwise her strategy is \defn{ad hoc}.  For example, her strategy
would be ad hoc if, when KFS chooses \chess{}, she ignores Kasparov
and copies chess moves between Fischer and Spassky, but when KFS
chooses \ox{}, she ignores Fischer and copies \oxX{} and \oxO{} moves
between Kasparov and Spassky.
In this case the geometry of her move copying depends on the game
imported by FKS: she is not treating $G$ as a ``black box''.

There are only two uniform strategies for Anna-Louise: either she
always copies between \emph{Kasparov} and Spassky, ignoring Fischer, or she always copies
between \emph{Fischer} and Spassky, ignoring Kasparov.
These correspond to the system $F$ terms 
\begin{displaymath}
\begin{array}{c}
\Lambda G\,.\:\lambda k^G\,.\,\lambda f^G\,.\,k\\[1ex]
\Lambda G\,.\:\lambda k^G\,.\,\lambda f^G\,.\,f
\end{array}
\end{displaymath}
respectively, of type 
$$\forall G\,.\,G\to G\to G\,,$$
where the variable $k$ corresponds to Kasparov and $f$ corresponds to Fischer.

More generally, with multiple bound $\forall$ variables and more
complicated game imports, we shall take
uniformity to mean that the links Anna-Louise sets up between
components (such as the Kasparov$\,\leftrightarrow\,$Spassky or
Fischer$\,\leftrightarrow\,$Spassky links above) must be independent
of the games imported by the opposing team: these imported games are impenetrable ``black boxes''.

\paragraph*{Fixed links.} 
Uniformity as independence from the particular games imported by the
opposing team will include independence from the not only the
\emph{identity} of those games, but also from their \emph{state}.
This will ensure that the geometry of Anna-Louise's copycat play
remains constant over time:
once she has committed to linking one component to another, she must
stick with that link for the rest of the hypergame.
To illustrate this aspect of uniformity, consider the quadruple chess
simultaneous display with Kasparov and Fischer playing Black, and
Karpov and Spassky playing White:
\begin{center}\vspace{2ex}
\begin{psmatrix}[colsep=5ex,rowsep=2ex]
Kasparov && Fischer && Karpov && Spassky  \\ \\
\rput(0,.1){\rnode{KasparovBoard}{\psscalebox{.45}{%
\newgame
\notationoff

\showboard
}}}\;\:
&&
\rput(0,.1){\rnode{FischerBoard}{\psscalebox{.45}{%
\newgame
\notationoff

\showboard
}}}\;\;\;
&&
\rput(0,.1){\rnode{KarpovBoard}{\psscalebox{.45}{%
\newgame
\notationoff

\showinverseboard
}}}\;
&&
\rput(0,.1){\rnode{SpasskyBoard}{\psscalebox{.45}{%
\newgame
\notationoff

\showinverseboard
}}}
\\[7ex]
&&&\makebox[0ex]{Anna-Louise}
\\[.5ex]
\;
\end{psmatrix}
\end{center}
We shall write $\chess\times\chess\,\to\,\chess\times\chess$ for this
simultaneous display.\footnote{With the backtracking caveat: see footnote~\ref{backtracking}.}
Suppose Spassky begins with \wmove{e4}.  Anna-Louise, playing copycat,
has a choice between copying this move to Fischer or to Kasparov.
Suppose she copies it to Fischer, who responds with \wmove{c5}, which
she duly copies back to Spassky:
\begin{center}\vspace{2ex}
\begin{psmatrix}[colsep=5ex,rowsep=2ex]
Kasparov && Fischer && Karpov && Spassky  \\ \\
\rput(0,.1){\rnode{KasparovBoard}{\psscalebox{.45}{%
\newgame
\notationoff

\showboard
}}}\;\:
&&
\rput(0,.1){\rnode{FischerBoard}{\psscalebox{.45}{%
\newgame
\notationoff
\hidemoves{1.e4 c5}  

\showboard
}}}\;\;\:
&&
\rput(0,.1){\rnode{KarpovBoard}{\psscalebox{.45}{%
\newgame
\notationoff

\showinverseboard
}}}\,
&&
\rput(0,.1){\rnode{SpasskyBoard}{\psscalebox{.45}{%
\newgame
\notationoff
\hidemoves{1.e4 c5} 

\showinverseboard
}}}
\\[14ex]
&&&\makebox[0ex]{Anna-Louise}
\\[.5ex]
\;
\end{psmatrix}
\nccurve[ncurv=.4,arrows=->,nodesep=1.6ex,angleA=-110,angleB=-70,offsetA=-1ex,offsetB=-1ex]{SpasskyBoard}{FischerBoard}
\bput(0.5){\wmove{e4}}
\nccurve[ncurv=.5,arrows=->,nodesep=.5ex,angleA=-70,angleB=-110,offsetA=-2ex,offsetB=-2ex]{FischerBoard}{SpasskyBoard}
\bput(0.5){\wmove{c5}}
\end{center}
Suppose Karpov opens his game with the very same move as Spassky,
\wmove{e4}, which Anna-Louise copies accross to Kasparov (the only
destination where this move makes sense):
\begin{center}\vspace{2ex}
\begin{psmatrix}[colsep=5ex,rowsep=2ex]
Kasparov && Fischer && Karpov && Spassky  \\ \\
\rput(0,.1){\rnode{KasparovBoard}{\psscalebox{.45}{%
\newgame
\notationoff
\hidemoves{1.e4} 

\showboard
}}}\;\:
&&
\rput(0,.1){\rnode{FischerBoard}{\psscalebox{.45}{%
\newgame
\notationoff
\hidemoves{1.e4 c5}  

\showboard
}}}\;\;\;
&&
\rput(0,.1){\rnode{KarpovBoard}{\psscalebox{.45}{%
\newgame
\notationoff
\hidemoves{1.e4} 

\showinverseboard
}}}\;
&&
\rput(0,.1){\rnode{SpasskyBoard}{\psscalebox{.45}{%
\newgame
\notationoff
\hidemoves{1.e4 c5} 

\showinverseboard
}}}
\\[13ex]
&&&\makebox[0ex]{Anna-Louise}
\\[.5ex]
\;
\end{psmatrix}
\nccurve[ncurv=.4,arrows=->,nodesep=1.6ex,angleA=-110,angleB=-70,offsetA=-1ex,offsetB=-1ex]{KarpovBoard}{KasparovBoard}
\bput(0.5){\wmove{e4}}
\end{center}
Kasparov responds with the same move as Fischer, \wmove{c5}, which Anna-Louise copies back to Karpov:
\begin{center}\vspace{2ex}
\begin{psmatrix}[colsep=5ex,rowsep=2ex]
Kasparov && Fischer && Karpov && Spassky  \\ \\
\rput(0,.1){\rnode{KasparovBoard}{\psscalebox{.45}{%
\newgame
\notationoff
\hidemoves{1.e4 c5} 

\showboard
}}}\;\:
&&
\rput(0,.1){\rnode{FischerBoard}{\psscalebox{.45}{%
\newgame
\notationoff
\hidemoves{1.e4 c5}  

\showboard
}}}\;\;\;
&&
\rput(0,.1){\rnode{KarpovBoard}{\psscalebox{.45}{%
\newgame
\notationoff
\hidemoves{1.e4 c5} 

\showinverseboard
}}}\;
&&
\rput(0,.1){\rnode{SpasskyBoard}{\psscalebox{.45}{%
\newgame
\notationoff
\hidemoves{1.e4 c5} 

\showinverseboard
}}}
\\[13ex]
&&&\makebox[0ex]{Anna-Louise}
\\[.5ex]
\;
\end{psmatrix}
\nccurve[ncurv=.4,arrows=->,nodesep=1.6ex,angleA=-110,angleB=-70,offsetA=-1ex,offsetB=-1ex]{KarpovBoard}{KasparovBoard}
\bput(0.5){\wmove{e4}}
\nccurve[ncurv=.5,arrows=->,nodesep=.5ex,angleA=-70,angleB=-110,offsetA=-2ex,offsetB=-2ex]{KasparovBoard}{KarpovBoard}
\bput(0.5){\wmove{c5}}
\end{center}
So far, Anna-Louise has linked Spassky with Fischer, and Karpov with Kasparov:
\begin{center}\vspace{2ex}
\begin{psmatrix}[colsep=5ex,rowsep=2ex]
Kasparov && Fischer && Karpov && Spassky  \\ \\
\rput(0,.1){\rnode{KasparovBoard}{\psscalebox{.45}{%
\newgame
\notationoff
\hidemoves{1.e4 c5} 

\showboard
}}}\;\:
&&
\rput(0,.1){\rnode{FischerBoard}{\psscalebox{.45}{%
\newgame
\notationoff
\hidemoves{1.e4 c5}  

\showboard
}}}\;\;\;
&&
\rput(0,.1){\rnode{KarpovBoard}{\psscalebox{.45}{%
\newgame
\notationoff
\hidemoves{1.e4 c5} 

\showinverseboard
}}}\;
&&
\rput(0,.1){\rnode{SpasskyBoard}{\psscalebox{.45}{%
\newgame
\notationoff
\hidemoves{1.e4 c5} 

\showinverseboard
}}}
\\[13ex]
&&&\makebox[0ex]{Anna-Louise}
\\[.5ex]
\;
\end{psmatrix}
\nccurve[ncurv=.4,arrows=<->,nodesep=.5ex,angleA=-70,angleB=-110]{KasparovBoard}{KarpovBoard}
\nccurve[ncurv=.4,arrows=<->,nodesep=.5ex,angleA=-70,angleB=-110]{FischerBoard}{SpasskyBoard}
\end{center}
By (contrived) coincidence, both pairs of linked boards happen to have
reached exactly the same state.  Therefore from this point onwards,
Anna-Louise could change the linkage, linking Kasparov with Spassky,
and Karpov with Fischer:
\begin{center}\vspace{2ex}
\begin{psmatrix}[colsep=5ex,rowsep=2ex]
Kasparov && Fischer && Karpov && Spassky  \\ \\
\rput(0,.1){\rnode{KasparovBoard}{\psscalebox{.45}{%
\newgame
\notationoff
\hidemoves{1.e4 c5} 

\showboard
}}}\;\:
&&
\rput(0,.1){\rnode{FischerBoard}{\psscalebox{.45}{%
\newgame
\notationoff
\hidemoves{1.e4 c5}  

\showboard
}}}\;\;\;
&&
\rput(0,.1){\rnode{KarpovBoard}{\psscalebox{.45}{%
\newgame
\notationoff
\hidemoves{1.e4 c5} 

\showinverseboard
}}}\;
&&
\rput(0,.1){\rnode{SpasskyBoard}{\psscalebox{.45}{%
\newgame
\notationoff
\hidemoves{1.e4 c5} 

\showinverseboard
}}}
\\[13ex]
&&&\makebox[0ex]{Anna-Louise}
\\[.5ex]
\;
\end{psmatrix}
\nccurve[ncurv=.35,arrows=<->,nodesep=.5ex,angleA=-70,angleB=-110]{KasparovBoard}{SpasskyBoard}
\nccurve[ncurv=.65,arrows=<->,nodesep=.5ex,angleA=-70,angleB=-110]{FischerBoard}{KarpovBoard}
\end{center}
For example, should Karpov respond with \wmove{Nf3}, she would copy
that move across to Fischer, then continue copying between Fischer and
Karpov, and between Kasparov and Spassky.

She could do this ``relinking'' for any game $G$, not just \chess, on
$G\times G\,\to\,G\times G$: no matter what the game $G$ is, she could
link the first and third $G$, and link the second and fourth $G$, but
if a point is reached in which all four copies of $G$ have the same
state, she switches the linkage, as in the chess example above.
If she consistently does this for all $G$, she has a strategy on the
hypergame $\forall G\,.\,G\times G\to G\times G$ which, in some fashion,
does not depend on $G$.  Such ``relinking'' strategies do not
correspond to system $F$ terms, and are eliminated from the model by
our uniformity condition: independence from $G$ means
independence not only from the \emph{identity} of $G$, but also from the
\emph{state} of $G$.

\subsection{Negative quantifiers}\label{sec-neg}

Linear polymorphism was modelled in \cite{Abr97} using a universal
notion of the games in \cite{AJ94,AJM00}.  Full completeness failed
for types with negative quantifiers.  In this subsection we illustrate
how the hypergames model successfully treats negative quantifiers.

The polarity of a quantifier in a type is positive or negative
according to the number of times it is to the left of an arrow (in the
syntactic parse tree of the type): positive if even, negative if odd.
For example, $\forall X$ is positive in $\forall X.T$ and $\forall
Y.\forall X.T$, negative in $(\forall X.U)\to T$ and $(V\to \forall
X.U)\to T$, and positive in $\big((\forall X.U)\to V\big)\to T$.

Consider the simultaneous display $H\to H$ where $H$ is the hypergame
$\allGG$:
\begin{center}\vspace{3ex}
\begin{psmatrix}[colsep=4ex,rowsep=0ex]
Fischer && Spassky  \\[4ex]
\rput(0,.1){\rnode{FischerBoard}{$\forall G_1\,.\,G_1\to G_1$}}
&\Large\;\;\;\;\;\;$\to$\;\;\;\;\;\;&
\rput(0,.1){\rnode{SpasskyBoard}{$\forall G_2\,.\,G_2\to G_2$}}
\\[3ex]
&\makebox[0ex]{Anna-Louise}
\\[.5ex]
\;
\end{psmatrix}
\end{center}
Fischer's quantifier $\forall G_1$ is negative.\footnote{The scope of
$\forall G_1$ in the diagram does not extend past the central arrow
{\normalsize$\to$}.}		
To kick off, Spassky must open the game $\forall G_2\,.\,G_2\to G_2$ in
front of him.  This is a hypergame, universally quantified, so he must
begin by instantiating $G_2$.  He chooses $G_2=\chess$, and opens
$\wmove{Nf3}$ on the board where he has White:
\begin{center}\vspace{2ex}
\hspace*{4ex}\begin{psmatrix}[colsep=4ex,rowsep=0ex]
Fischer && Spassky  \\[6ex]
\rput(0,.1){\rnode{FischerBoard}{$\forall G_1\,.\,G_1\to G_1$}}
&\Large\hspace*{7ex}$\to$\hspace*{7ex}&
\hspace*{6ex}\chesssimulscale{}{1.Nf3}{.45}{8.5ex}
\\[9ex]
&\makebox[0ex]{Anna-Louise}
\\[3ex]
\;
\end{psmatrix}
\end{center}
We shall consider three of the copycat strategies available to Anna-Louise from this point:
\begin{center}
\begin{tabular}{|c|l|l|}\hline
\bf \!\!Strategy\!\! & \bf Anna Louise\ldots & \bf \rule{0ex}{2.5ex}Corresponding term of type $H\to H$\raisebox{-1ex}{\strut} \\\hline
$\iota$ & 
\parbox{2.7in}{\rule{0ex}{2.2ex}\raggedright\ldots\ copies what Spassky did accross to Fischer: import $\chess$ and play $\wmove{Nf3}$\raisebox{-1ex}{\strut}} &
$\lambda h^H.h$ \\\hline
$\sigma$ & 
\parbox{2.7in}{\rule{0ex}{2.2ex}\raggedright\ldots\ plays copycat in Spassky's local chess display, ``playing Spassky against himself''\raisebox{-1ex}{\strut}} & 
$\lambda h^H.\Lambda G.\lambda g^{G}.g$ \\\hline
$\tau$ &
\parbox{2.7in}{\rule{0ex}{2.2ex}\raggedright\ldots\ imports $G_1\,=\,\chess\to\chess$ against Fischer,
 then copies moves between the six resulting boards, along three
 ``copycat links''\raisebox{-1ex}{\strut}} & 
$\lambda h^H.\Lambda G.\lambda g^{G}.h_{G\to G}(\lambda x^{G}.x)g$ \\\hline
\end{tabular}
\end{center}
The notation $h_U$ in the third term denotes the application of $h$ to
the type $U$.

\paragraph*{The first copycat strategy $\iota\,$.}
Anna-Louise opens the hypergame $\forall G_1\,.\,G_1\to G_1$ in front
of Fischer by mimicking Spassky: she imports $\chess$ for $G_1$ and
opens with $\wmove{Nf3}$ as White:
\begin{center}\vspace{2ex}
\begin{psmatrix}[colsep=4ex,rowsep=0ex]
Fischer\;\;\;\;\;\;\;\;&&Spassky\\[6ex]
\invchesssimulscale{}{1.Nf3}{.45}{8.5ex}\hspace*{4ex}
&\Large\hspace*{7ex}$\to$\hspace*{7ex}&
\hspace*{4ex}\chesssimulscale{}{1.Nf3}{.45}{8.5ex}
\\[9ex]
&\makebox[0ex]{Anna-Louise}\;\;\;\;\;\;
\\[2ex]
\;
\end{psmatrix}
\end{center}
She then copies moves between the four boards according to the
following geometry of copycat links:
\begin{center}\vspace{2ex}
\begin{psmatrix}[colsep=4ex,rowsep=0ex]
Fischer\;\;\;\;\;\;\;\;&&Spassky\\[6ex]
\invchesssimulscale{}{1.Nf3}{.45}{8.5ex}\hspace*{4ex}
&\Large\hspace*{7ex}$\to$\hspace*{7ex}&
\hspace*{4ex}\chesssimulscale{}{1.Nf3}{.45}{8.5ex}
\\[14ex]
&\makebox[0ex]{Anna-Louise}\;\;\;\;\;\;
\\[2ex]
\;
\end{psmatrix}%
\nccurve[ncurv=.4,arrows=<->,nodesep=.5ex,angleA=-60,angleB=-120]{c}{a}
\nccurve[ncurv=.4,arrows=<->,nodesep=.5ex,angleA=-60,angleB=-120]{d}{b}
\end{center}
This copycat strategy $\iota$ corresponds to the identity system $F$
term $$\lambda h^H.h$$ of type $H\to H$.  (Recall $H\,=\,\forall G\,.\,G\to
G$.) The same strategy models the $\eta$-expanded variant $\lambda
h^H.\Lambda G.\lambda g^G.h_G\,g$.

\paragraph*{The second copycat strategy $\sigma\,$.}
The second copycat strategy $\sigma$ ``plays Spassky against
himself''.  Recall the state after Spassky's opening move:
\begin{center}\vspace{2ex}
\hspace*{4ex}\begin{psmatrix}[colsep=4ex,rowsep=0ex]
Fischer && Spassky  \\[6ex]
\rput(0,.1){\rnode{FischerBoard}{$\forall G_1\,.\,G_1\to G_1$}}
&\Large\hspace*{7ex}$\to$\hspace*{7ex}&
\hspace*{4ex}\chesssimulscale{}{1.Nf3}{.45}{8.5ex}
\\[7ex]
&\makebox[0ex]{Anna-Louise}
\\[2ex]
\;
\end{psmatrix}
\end{center}
Spassky has just imported \chess{} and opened with the White move
\wmove{Nf3}.  In this scenario Anna-Louise copies that move locally,
to the other board in front of Spassky:
\begin{center}\vspace{2ex}
\hspace*{4ex}\begin{psmatrix}[colsep=4ex,rowsep=0ex]
Fischer && Spassky  \\[6ex]
\rput(0,.1){\rnode{FischerBoard}{$\forall G_1\,.\,G_1\to G_1$}}
&\Large\hspace*{7ex}$\to$\hspace*{7ex}&
\hspace*{4ex}\chesssimulscale{1.Nf3}{1.Nf3}{.45}{8.5ex}
\\[7ex]
&\makebox[0ex]{Anna-Louise}
\\[2ex]
\;
\end{psmatrix}
\end{center}
Spassky may respond with \wmove{g6} as Black, which
Anna-Louise copies back to the other board:
\begin{center}\vspace{2ex}
\hspace*{4ex}\begin{psmatrix}[colsep=4ex,rowsep=0ex]
Fischer && Spassky  \\[6ex]
\rput(0,.1){\rnode{FischerBoard}{$\forall G_1\,.\,G_1\to G_1$}}
&\Large\hspace*{7ex}$\to$\hspace*{7ex}&
\hspace*{4ex}\chesssimulscale{1.Nf3 g6}{1.Nf3 g6}{.45}{8.5ex}
\\[7ex]
&\makebox[0ex]{Anna-Louise}
\\[2ex]
\;
\end{psmatrix}
\end{center}
She continues to copy moves along the following copycat link, leaving
Fischer to forever twiddle his thumbs:
\begin{center}\vspace{2ex}
\hspace*{4ex}\begin{psmatrix}[colsep=4ex,rowsep=0ex]
Fischer && Spassky  \\[6ex]
\rput(0,.1){\rnode{FischerBoard}{$\forall G_1\,.\,G_1\to G_1$}}
&\Large\hspace*{7ex}$\to$\hspace*{7ex}&
\hspace*{4ex}\chesssimulscale{1.Nf3 g6}{1.Nf3 g6}{.45}{8.5ex}
\\[9ex]
&\makebox[0ex]{Anna-Louise}
\\[3ex]
\;
\end{psmatrix}%
\nccurve[ncurv=.7,arrows=<->,nodesep=.5ex,angleA=-90,angleB=-90]{a}{b}%
\end{center}
This copycat strategy corresponds to the system $F$ term
$$\lambda h^H.\Lambda G.\lambda g^G.g$$
of type $H\to H$.  (Recall $H\,=\,\allGG\,$.)
Fischer's eternal thumb twiddling corresponds to $h$ not showing up in the body of the term.

\paragraph*{The third copycat strategy $\tau\,$.}
The third copycat strategy $\tau$, like the first, the identity
$\iota$, responds to Fischer.  However, instead of importing $\chess$
for $G_1$ against Fischer, as in $\iota$, Anna-Louise imports a
simultaneous chess display $\chess\to\chess$ for $G_1$:\footnote{As
usual, the large arrow {\Large$\to$} between Fischer and Spassky binds
most strongly (so we can omit brackets around the left four boards).}
\begin{center}\vspace{2ex}
\begin{psmatrix}[colsep=4ex,rowsep=0ex]
Fischer\hspace*{6ex}&&Spassky\\[4ex]
\hspace*{-6ex}$\left(\hspace*{5.5ex}\mbox{\chesssimulscale{}{}{.26}{5ex}}\hspace*{5.5ex}\right)$%
\hspace*{1ex}{\small$\to$}\hspace*{1ex}%
$\left(\hspace*{5.5ex}\mbox{\invchesssimulscale{}{1.Nf3}{.26}{5ex}}\hspace*{5.5ex}\right)$%
&\Large\;\;$\to$\;\;&
\hspace*{6ex}\chesssimulscale{}{1.Nf3}{.3}{5.5ex}
\\[7ex]
\hspace{24ex}Anna-Louise
\\[2ex]
\;
\end{psmatrix}%
\end{center}
As shown above, Anna-Louise copies Spassky's \wmove{Nf3} onto the
fourth board against Fischer.
She continues with the following geometry of copycat links:
\begin{center}\vspace{2ex}
\begin{psmatrix}[colsep=4ex,rowsep=0ex]
Fischer\hspace*{6ex}&&Spassky\\[4ex]
\hspace*{-6ex}$\left(\hspace*{5.5ex}\mbox{\chesssimulscale{}{}{.26}{5ex}}\hspace*{5.5ex}\right)$%
\nccurve[ncurv=.7,arrows=<->,nodesep=.5ex,angleA=-90,angleB=-90]{a}{b}%
\hspace*{1ex}{\small$\to$}\hspace*{1ex}%
$\left(\hspace*{5.5ex}\mbox{\invchesssimulscale{}{1.Nf3}{.26}{5ex}}\hspace*{5.5ex}\right)$%
&\Large\;\;$\to$\;\;&
\hspace*{6ex}\chesssimulscale{}{1.Nf3}{.3}{5.5ex}
\\[7ex]
\hspace{24ex}Anna-Louise
\\[2ex]
\;
\end{psmatrix}%
\nccurve[ncurv=.4,arrows=<->,nodesep=.5ex,angleA=-70,angleB=-110]{c}{a}
\nccurve[ncurv=.4,arrows=<->,nodesep=.5ex,angleA=-70,angleB=-110]{d}{b}
\end{center}
On the right four boards she continues just as on the four boards of
the identity $\iota$.  If Fischer responds as Black on the fourth
board, she copies this to the last board against Spassky, and if
Fischer opens as White on the third board, she copies this to open the
other board against Spassky.

On the left two boards she ``plays Fischer against himself''.  If
Fischer opens with White on the second board, she copies this to him
on the first board; if Fischer responds as Black on the first board,
she copies that back to the second board.
This corresponds to the first argument $\lambda x^G.x$ of $h_{G\to G}$
in the term
$$\lambda h^H.\Lambda G.\lambda g^G.h_{G\to G}\,(\lambda x^G.x)\,g$$
associated with this strategy.  

Note that all three of the above copycat strategies are uniform: had
the imported game been \ox{} instead of \chess, Anna-Louise would have
copied the moves around in exactly the same geometry.
In the third strategy she would have imported $\ox\to\ox$ for $G_1$
against Fischer.
This strategy always imports $K\to K$ against Fischer, whatever the
game $K$ imported by Spassky.  The geometry of Anna-Louise's six copycat
links is independent of $K$.

\subsection{Other conceptual ingredients of the model}

\emph{This subsection may be somewhat abstruse for readers not already familiar with game semantics; 
consider skipping to Section~\ref{sec-trans} below, without loss of
continuity.}

So far in this introduction we have sketched the following ingredients
of our model:
\begin{itemize}
\item Hypergames: games as moves, to model universal quantification/instantiation.
\item Self-reference: hypergames can be imported into hypergames, and a 
hypergame may even be imported into itself.
\item Uniformity: the shape of Anna-Louise's play, in terms of how 
we copy moves around, cannot depend on the choices of games imported
by the opposing team: she must treat those games as ``black boxes''.  Once
two (sub)games are linked by copycat, she cannot change that link.
\end{itemize}
The following additional ingredients come from prior (first-order,
unquantified) work:
\begin{itemize}
\item Backtracking.  We permit moves to be taken back during play, corresponding to the fact that a system $F$ 
function can call its argument an arbitrary number of times.
Backtracking was used by Lorenzen \cite{Lor60,Fel85} for modelling
proofs of intuitionistic logic, by Coquand \cite{Coq91,Coq95}, and by
Hyland and Ong \cite{HO}.
\item
Innocence.  Following Coquand \cite{Coq91,Coq95}, Hyland-Ong \cite{HO}
and Nickau \cite{Nic96}, strategies depend only on a restricted
``view'' of the history of play.
\item Interaction.  We use Coquand-style interaction between backtracking strategies
to model normalisation of system $F$ terms, specifically, the
refinement by Hyland and Ong of this interaction in a lambda calculus (cartesian closed)
setting.
\item Liveness.  A strategy must always be able to make a move (coined \emph{liveness} by Conway \cite{Con76}).
\item Copycat condition.  We impose (a restriction of) Lorenzen's condition \cite{Lor60} for dialogues
listed by Felscher as (D10) \cite{Fel85}, which requires that an
atomic formula (or in the present system $F$ context, a type variable)
be ``asserted'' by Anna-Louise only if, within her view, the opposing
team has just asserted it.\footnote{I was unaware of Lorenzen's (D10) at the time I wrote \cite{Hug97,Hug00}.}
\end{itemize}
These additional ingredients relate to quantifiers:
\begin{itemize}
\item Copycat expansion.
Technically, uniformity will be implemented by \emph{copycat
expansion} \cite{Hug06h}, similar to Felscher's \emph{skeleton
expansion} \cite{Fel85,Fel01} (and equivalent to the condition in
\cite{Hug97,Hug00}): whenever a strategy includes a play (accepts a
move sequence) $p$, with a variable $X$ imported by the opponent into
a quantified variable, then for all types $T$, all variants of $p$
obtained by substituting $T$ for $X$ and playing copycat between
appropriate instances of $T$ are also in the strategy.\footnote{I was
unaware of Felscher's skeleton expansion at the time I wrote
\cite{Hug97,Hug00}.}
\item Compactness.  A strategy is determined by a finite ``skeleton'', which expresses only the 
copycat links between components.
\end{itemize}
The main theorem is that the map from system $F$ terms to strategies
(satisfying the above properties) is surjective.  A surjectivity
theorem of this kind for simply typed $\lambda$-calculus is given in
\cite{Plo80}, but since \cite{AJ92} such a result in a logical 
setting has often come to be referred to as \emph{full completeness},
when it includes a semantic notion of composition.

\subsubsection{Modular construction of games}

We shall define system $F$ games modularly.  First we define a
transition system whose states are system $F$ types, and whose
transition labels are hypermoves.  The hypermoves involve
instantiating quantifiers in the states (just as the examples above
involved instantiating quantifiers during play).

Every transition system determines a forest (disjoint union of
trees): its set of non-empty traces.
Every forest can be interpreted as an \emph{arena}, in the sense of
Hyland and Ong \cite{HO}.

Following Hyland and Ong, every arena defines a game, with
backtracking.  The (hyper)game we associate with a system $F$ type
will be such a backtracking arena-game.  Since we use arena games,
interaction of strategies (composition) is precisely the Hyland-Ong
interaction.

The underlying first-order composition allows us to relate the
composition to an underlying untyped lambda calculus machine, as in
\cite{DHR96}, upon erasing the system $F$ type information.  In other
words, the composition, when viewed as acting on $\eta$-long
$\beta$-normal forms (representing innocent view functions),
corresponds to (a) erasing the system $F$ type information, (b)
computing with the abstract machine
\cite{DHR96} on the underlying untyped lambda term, then (c) replacing
type information.\footnote{I have a vague recollection that just
such an abstract machine was analysed for system $F$ in the masters'
thesis of Eike Ritter.  I need to investigate this.}
If we erase the type information but stay in the model (\ie, we don't
look at the lambda terms), then we are just composing strategies in a
naive games model of untyped lambda calculus.  The underlying
transition system of the untyped lambda game has a single state and
every integer $i\ge 1$ as transition labels.  These integer labels are
precisely the result of deleting the instatiating types from the
transition labels of the system $F$ transition graph.  Or to put it
another way: the system $F$ transition labels are those of the untyped lambda
transition graph together with type instantiations.
The untyped lambda calculus games similar to those in \cite{KNO02}.

\subsection{Related work}

Affine linear polymorphism was modelled in\cite{Abr97}\footnote{Samson
Abramsky's course at this summer school, during the summer before my
D.Phil., is in part what inspired my choice of thesis topic.} with a
PER-like ``intersections'' of first-order games of the form \cite{AJ94,AJM00}.
Abramsky and Lenisa have explored systematic ways of modelling
quantifiers so that, in the limited case in which all quantifiers are
outermost (so in particular positive), models are fully complete
\cite{AL00}.  (See subsection~\ref{sec-neg} for a simple example of a type 
at which full completeness fails.)

The hypergame/uniformity technique presented here has been applied to
affline linear logic \cite{MO01,Mur01}, and has been used to study
Curry-style type isomorphisms \cite{deL06}.

\section{Transition system games and backtracking}\label{sec-trans}

A game such as \chess{} or \ox{} has a \emph{state} (the configuration
of the board or grid) and, for every state, a set of
\emph{transitions} or moves (\eg\ \wmove{Nf3},
\wmove{Bb4}, $\oxX$ top-right, $\oxO$ centre), each with an ensuing
state.\footnote{For a game of chance such as backgammon, one would
specify a probability distribution over ensuing states, rather than a
single ensuing state.  We consider only deterministic games here.}
Such a game can be specified as a deterministic labelled \emph{transition system}:
an edge-labelled directed graph whose vertices are the states of the
game, with a distinguished initial state.
A fragment of the transition system for chess is illustrated
below.\footnote{The states include data for en passant and castling
and rights, and the turn (Black or White to move), not shown in the
diagram.}
\begin{center}
\pspicture*(-14,-10)(15,.3)
\psset{nodesep=5pt,xunit=.25cm,yunit=.57cm}
\rput(0,-1){\rnode{chess}{\chessposn{}}}
\rput(-6,-6){\rnode{d4}{\chessposn{1.d4}}}
\ncline[arrows=->]{chess}{d4}
\bput(0.5){\wmove{d4}}
\rput(6,-6){\rnode{c4}{\chessposn{1.c4}}}
\ncline[arrows=->]{chess}{c4}
\aput(0.5){c4}
\rput(-17,-9.5){\rnode{d4f5}{\chessposn{1.d4 f5}}}
\ncline[arrows=->]{d4}{d4f5}
\bput(0.5){f5}
\rput(-6,-11){\rnode{d4Nf6}{\chessposn{1.d4 Nf6}}}
\ncline[arrows=->]{d4}{d4Nf6}
\aput(0.5){\wmove{Nf6}}
\rput(17,-9.5){\rnode{c4e5}{\chessposn{1.c4 e5}}}
\ncline[arrows=->]{c4}{c4e5}
\aput(0.5){e5}
\rput(6,-11){\rnode{c4Nf6}{\chessposn{1.c4 Nf6}}}
\ncline[arrows=->]{c4}{c4Nf6}
\bput(0.5){\wmove{Nf6}}
\rput(-14.5,-15.7){\rnode{d4Nf6Nf3}{\chessposn{1.d4 Nf6 2.Nf3}}}
\ncline[arrows=->]{d4Nf6}{d4Nf6Nf3}
\bput(0.5){\wmove{Nf3}}
\rput(14.5,-15.7){\rnode{c4Nf6g3}{\chessposn{1.c4 Nf6 2.g3}}}
\ncline[arrows=->]{c4Nf6}{c4Nf6g3}
\aput(0.5){\wmove{g3}}
\rput(0,-16){\rnode{join}{\chessposn{1.d4 Nf6 2.c4}}}
\ncline[arrows=->]{d4Nf6}{join}
\bput(0.4){c4}
\ncline[arrows=->]{c4Nf6}{join}
\aput(0.4){d4}
\endpspicture
\end{center}
Note that the graph is not a tree.
Without a distinguished initial state, we shall refer to such a graph as a \emph{transition graph}.

Formally, a \defn{transition graph} $(Q,M,\trans)$ comprises a set $Q$
of \defn{states}, a set $L$ of \defn{labels}, and a partial \defn{transition function}
$\trans:Q\times L\pto Q$.\footnote{We
write $f:X\pto Y$ if $f$ is a partial function from $X$ to $Y$, \ie,
a function $X'\to Y$ for some $X'\subseteq X$.}
We write $q\transto{l}q'$ for $\trans(q,l)=q'$.
A \defn{transition system} $(Q,L,\trans,\star)$ is a transition graph
$(Q,L,\trans)$ together with an \defn{initial state} $\star\in Q$.
A \defn{trace} of $(Q,L,\trans,\star)$
is a finite sequence $l_1\ldots l_k$ of labels $l_i\in L$
$(k\ge 0)$ such that
$$\star\transto{l_1}q_1\transto{l_1}q_2\transto{l_3}\cdots\transto{l_{k-1}}
q_{k-1}\transto{l_k}q_k$$ for states $q_i\in Q$ ($1\le i\le
k$).\footnote{Note that the states $q_i\in Q$ are uniquely determined
by the $l_i$, since our transition systems are implicitly
deterministic.}
For example, \hspace{1ex}\wmove{d4}\;\wmove{Nf6}\;\wmove{c4}\hspace{1ex} is a trace of
chess, visible in the diagram above.

\subsection{Games}

Let $M$ be a set of \defn{moves}.  A \defn{trace over $M$} or
\defn{$M$-trace} is a list (finite sequence) $m_1\ldots m_k$ of moves $m_i\in
M$ ($k\ge 0$).
A set $G$ of $M$-traces is a \defn{tree} if whenever $m_1\ldots m_k$
is in $G$ with $k\ge 1$ then its \defn{predecessor} $m_1\ldots
m_{k-1}$ is also in $G$, and the empty trace $\emptyseq$ is in $G$ (the
root of the tree).
A \defn{game over $M$} or \defn{$M$-game} is a tree of $M$-traces.
Following \cite{HO}, we write $\opp$ for the first player
(associated with odd moves,
\ie, moves in a trace with odd index), and $\pla$ for the second player
(associated with even moves).

Every transition system $\Delta$ with label set $M$ defines a game
$G(\Delta)$ over $M$, namely the set of traces of $\Delta$.
For example, if $\chesssys$ is the chess transition system
depicted above, and $\chessmov$ is the set of all chess moves
$\{\wmove{Ne5},\wmove{Qa2},\wmove{Kh1},\ldots\}$, then
$G(\chesssys)$ (the set of all traces of the chess transition
system) is a game over $\chessmov$.
This game comprises all legal sequences of chess moves.

\subsection{Strategies}

A \defn{strategy} (implicitly for the second player $\pla$) for a game
$G$ is a tree $\sigma\subseteq G$ whose every odd-length trace has a
unique one-move extension in $\sigma$: if $m_1\ldots m_k\in \sigma$
and $k$ is odd,
there exists a unique move $m$ such that $m_1\ldots m_k m\in
\sigma$.
A strategy $\sigma$ for $G$ is \defn{live} (or \defn{total}) if it responds to every
stimulus: if $m_1\ldots m_k\in \sigma$ with $k$ even and $m_1\ldots
m_km\in G$, then $m_1\ldots m_km\in
\sigma$.\footnote{Thus $m_1\ldots m_kmn\in \sigma$ for a unique $n$,
the ``response of $\sigma$ to $m$ after $m_1\ldots m_k$''.  One is
also tempted to call such a strategy \emph{total}, by analogy with
partial versus total functions; we shall stick with Conway's original
terminology \cite{Con76}.}

\subsection{Backtracking}

When playing chess against a computer, there is usually an option
to take a move back.  If we allow both players (user and computer) to
take back moves, and also to return to previously abandoned lines, we
obtain a derived game in which a move is either an opening chess move
(starting or restarting the game) or is a pair: a pointer to an
earlier move by the opponent, and a chess move in response to that
move.  For example, here is a trace of backtracking chess, with time
running left-to-right (so backtracking pointers are right-to-left):
\begin{center}\vspace{4ex}\begin{math}
\psset{nodesep=2pt,colsep=4ex,nodesep=1.5pt}\begin{psmatrix}
\rnode{1}{\wmove{e4}}
&
\rnode{2}{\wmove{e5}}
&
\rnode{3}{\wmove{Nf3}}
&
\rnode{4}{\wmove{c5}}
&
\rnode{5}{\wmove{f4}}
&
\rnode{6}{\wmove{Nc6}}
&
\rnode{7}{\wmove{Bb5}}
&
\rnode{8}{\wmove{Nf6}}
&
\rnode{9}{\wmove{e3}}
&
\rnode{10}{\wmove{a6}}
\end{psmatrix}
\jptr{2}{1}{150}{35}
\jptr{3}{2}{-140}{-40}
\jptr{4}{1}{150}{35}
\jptr{5}{2}{-140}{-40}
\jptr{6}{3}{150}{30}
\jptr{7}{6}{-140}{-40}
\jptr{8}{3}{150}{30}
\jptr{10}{7}{150}{30}
\end{math}\vspace{4ex}\end{center}
The penultimate move \wmove{e3}, with no backtracking pointer, is a
restarting move.
Since this is a trace of a game with an underlying transition system,
we can include the states in the depiction, as below, which
corresponds to the first six moves above.
\begin{center}\vspace{6ex}\newcommand{\scal}{0.23}\label{chessdiagstates}
\psset{nodesep=2pt,colsep=5ex,nodesep=1.5pt}\begin{psmatrix}
\rnode{0}{\chessposnscale{\scal}{}}
&
\rnode{1}{\chessposnscale{\scal}{1.e4}}
&
\rnode{2}{\chessposnscale{\scal}{1.e4 e5}}
&
\rnode{3}{\chessposnscale{\scal}{1.e4 e5 2.Nf3}}
&
\rnode{4}{\chessposnscale{\scal}{1.e4 c5}}
&
\rnode{5}{\chessposnscale{\scal}{1.e4 e5 2.f4}}
&
\rnode{6}{\chessposnscale{\scal}{1.e4 e5 2.Nf3 Nc6}}
\end{psmatrix}
\nccurve[arrows=<-,angleA=-170,angleB=-10]{1}{0}\aput(0.5){\wmove{e4}}
\nccurve[arrows=<-,angleA=170,angleB=10]{2}{1}\bput(0.5){\wmove{e5}}
\nccurve[arrows=<-,angleA=-170,angleB=-10]{3}{2}\aput(0.5){\wmove{Nf3}}
\nccurve[ncurv=.5,nodesep=5pt,arrows=<-,angleA=145,angleB=35,offsetA=-1.2ex,offsetB=-1.2ex]{4}{1}\bput(0.5){\wmove{c5}}
\nccurve[ncurv=.5,nodesep=5pt,arrows=<-,angleA=-145,angleB=-35,offsetA=1.2ex,offsetB=1.2ex]{5}{2}\aput(0.5){\wmove{f4}}
\nccurve[ncurv=.5,nodesep=5pt,arrows=<-,angleA=145,angleB=35,offsetA=-1.2ex,offsetB=-1.2ex]{6}{3}\bput(0.5){\wmove{Nc6}}
\vspace{6ex}\end{center}
In this depiction we draw the pointers akin to transitions in the
underlying transition system, with their labels.  This clarifies the
sense in which we refer back to a previous state during backtracking,
and make our move from there.

We shall write $\backtrack{G}$ for the backtracking variant of a game $G$, formalised below.
Let $M$ be a set of moves.  A 
\defn{dialogue} over $M$ is a an $M$-trace in which each
element may carry an odd length pointer to an earlier element (\cf\ \cite{Lor60,Fel85,Coq91,Coq95,HO}).
For example, a dialogue over the set $\chessmov$ of chess
moves is depicted above.
Formally, a dialogue over $M$ is an ($\Nat\!\times\!M$)-trace\footnote{$\Nat=\{0,1,2,\ldots\}$.}
$$\langle \alpha_1,m_1\rangle\ldots\langle \alpha_k,m_k\rangle$$ such
that $i-\alpha_i\in\{1,3,5,\ldots\}$ for $1\le i\le
k$.
Each $\alpha_i$ represents a pointer from $m_i$ back to
$m_{\alpha_i}$, with $\alpha_i=0$ coding ``$m_i$ has no pointer''.
The formalisation of the chess dialogue depicted above is the
following ($\Nat\!\times\!\chessmov$)-trace:
\begin{center}\begin{math}
\langle 0,\wmove{e4}\rangle\;\langle 1,\wmove{e5}\rangle\;
\langle 2,\wmove{Nf3}\rangle\; \langle 1,\wmove{c5}\rangle\; \langle 2,\wmove{f4}\rangle\; \langle 3,\wmove{Nc6}\rangle \; \langle 6,\wmove{Bb5}\rangle
\; \langle 3,\wmove{Nf6}\rangle \; \langle 0,\wmove{e3}\rangle \; \langle 7,\wmove{a6}\rangle
\end{math}\end{center}
A move of the form $\langle 0,m\rangle$, without a pointer, is a
\defn{starting} move.
A \defn{thread} of a dialogue over $M$ is any sequence of elements
traversed from a starting move by following pointers towards the
right.
For example,
$\wmove{e4}\:\wmove{e5}\:\wmove{Nf3}\:\wmove{Nc6}\:\wmove{Bb5}$ is a
thread of the chess dialogue above:
\begin{center}\vspace{2ex}\begin{math}
\psset{colsep=4ex,nodesep=1.5pt}\begin{psmatrix}
\rnode{1}{\wmove{e4}}
&
\rnode{2}{\wmove{e5}}
&
\rnode{3}{\wmove{Nf3}}
&
{\rule{2ex}{0ex}\rule{0ex}{1ex}}
&
{\rule{2ex}{0ex}\rule{0ex}{1ex}}
&
\rnode{6}{\wmove{Nf6}}
&
\rnode{7}{\wmove{Bb5}}
&
{\rule{2ex}{0ex}\rule{0ex}{1ex}}
&
{\rule{2ex}{0ex}\rule{0ex}{1ex}}
&
{\rule{2ex}{0ex}\rule{0ex}{1ex}}
\end{psmatrix}
\jptr{2}{1}{150}{35}
\jptr{3}{2}{-140}{-40}
\jptr{6}{3}{150}{30}
\jptr{7}{6}{-140}{-40}
\end{math}\vspace{3ex}\end{center}
The singleton sequence $\wmove{e3}$ is also a thread, as is \wmove{e4}\,\wmove{e5}\,\wmove{f4}\,.
Formally, an $M$-trace $m_{d_1}\ldots m_{d_n}$ (where $n\ge 0$) is a
\defn{thread} of
the dialogue $\langle \alpha_1,m_1\rangle\cdots\langle
\alpha_k,m_k\rangle$ over $M$ if $\alpha_{d_1}=0$ and
$\alpha_{d_j}=d_{j-1}$ for $1<j\le n$.

Let $G$ be an $M$-game.  A dialogue over $M$ \defn{respects $G$} if
its threads are in $G$.
For example, if $\chess$ abbreviates our earlier formalisation
$G(\chesssys)$ of the game of chess as a transition system game,
then the dialogue over $\chessmov$ depicted above respects
$\chess$ (since every thread is a legal sequence of chess moves from the
initial chess position).
The \defn{backtracking game} $\backtrack G$ is
the set of all dialogues over $M$ which respect $G$.
For example, the dialogue over $M_{\chess}$ depicted above is a trace
of $\backtrack{\chess}$, \ie, of ``backtracking chess''.

The \defn{$\pla$-backtracking game} $\pbacktrack{G}$ is obtained from
$\backtrack G$ by permitting only the second player $\pla$ to
backtrack: every $\opp$-move (move in odd position) but the first
points to the previous move.  Formally, $\pbacktrack{G}$ comprises
every $\langle \alpha_1,m_1\rangle\ldots\langle \alpha_k,m_k\rangle$
in $\backtrack G$ such that $\alpha_i=i-1$ for all odd
$i\in\{1,\ldots,k\}$.
A dialogue of $\pbacktrack{\chess}$ is shown below.
\begin{center}\vspace{6ex}\begin{math}
\psset{nodesep=2pt,colsep=4ex,nodesep=1.5pt}\begin{psmatrix}
\rnode{1}{\wmove{e4}}
&
\rnode{2}{\wmove{e5}}
&
\rnode{3}{\wmove{f4}}
&
\rnode{4}{\wmove{c5}}
&
\rnode{5}{\wmove{Nf3}}
&
\rnode{6}{\wmove{d6}}
&
\rnode{7}{\wmove{d4}}
&
\rnode{8}{\wmove{e6}}
&
\rnode{9}{\wmove{d4}}
&
\rnode{10}{\wmove{cd}}
\end{psmatrix}
\jptr{2}{1}{150}{35}
\jptr{3}{2}{-140}{-40}
\jptr{4}{1}{150}{36}
\jptr{5}{4}{-140}{-40}
\jptr{6}{5}{150}{30}
\jptr{7}{6}{-140}{-40}
\jptr{8}{1}{150}{36}
\jptr{9}{8}{-145}{-40}
\jptr{10}{7}{150}{30}
\end{math}\vspace{2ex}\end{center}

For every type $T$ of system $F$, we shall define a transition system
$\Delta_T$ and define the hypergame associated with $T$ simply as the
backtracking game over this transition system, \ie,
$\backtrack{G(\Delta_T)}$.
For didactic purposes, we begin in the next section with the
restricted case of lambda calculus.

\section{Lambda calculus games}

Let $\lambdax$ denote the types of $\lambda$ calculus generated from
a single base type $X$ by implication $\to$.  
Every $\lambdax$ type $T$ determines a transition system $\Delta_T$:
\begin{itemize}
\item 
States are $\lambdax$ types, with an additional initial state $\dummy\,$.
\item A label is any $i\in\{1,2,3,\ldots\}$, called a \defn{branch choice}.
\item Transitions.  A $1$-labelled transition
$$\dummy\hspace{1ex}\transto{1}\hspace{1ex}T$$
from the initial state to $T$, and transitions
$$T_1\to T_2\to\ldots\to T_n\to X\hspace{3ex}\transto{i}\hspace{3ex}T_i{_{_{\rule{0ex}{2ex}}}}\hspace*{8ex}$$
for $1\le j\le n$.
\end{itemize}
For example, if 
$U\;=\;X\to(X\to X)\to X$
then the reachable portion of the transition system $\Delta_U$ is
\begin{center}\vspace{1ex}\begin{math}
\psset{colsep=-2ex,rowsep=6ex,nodesep=.9ex,labelsep=1pt}\begin{psmatrix}
{} & \rnode{q0}{\dummy} & {} \\
{} & \rnode{q1}{\;\;X\to(X\to X)\to X}     & {} \\
\rnode{q2}{X} & {} & \rnode{q3}{X\to X}
\transbetweena{q0}{q1}{1}{.45}
\transbetweenb{q1}{q2}{1}{.55}
\transbetweena{q1}{q3}{2}{.55}
\transbetweena{q3}{q2}{1}{.5}
\end{psmatrix}\end{math}\vspace{2ex}\end{center}
so the associated (non-backtracking) game (set of traces) $G(\Delta_U)$ is
$\{ \emptyseq, 1, 11, 12, 121 \}$,
where $\emptyseq$ denotes the empty sequence.
\begin{theorem}\label{theoremX}
Let $T$ be a lambda calculus type generated from a single base type
$X$ by implication $\to$.  The $\eta$-expanded $\beta$-normal terms of type $T$
are in bijection with finite live
strategies on the $\player$-backtracking game $\pbacktrack
{G(\Delta_T)}$.
\end{theorem}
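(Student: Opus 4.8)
The plan is to construct the bijection by recursion on the shape of $\eta$-long $\beta$-normal forms, first noting that in this restricted setting the usual side conditions of a Hyland--Ong style definability argument are vacuous. The first step is to unfold the definitions. A non-empty trace of $\Delta_T$ is a path from the root of the forest of traces, and for a $\lambdax$ type that forest coincides with the Hyland--Ong arena of $T$ (occurrences of the base type $X$, ordered by hereditary justification); so a dialogue of $\pbacktrack{G(\Delta_T)}$ is exactly an alternating play in that arena in which every $\opp$-move but the first points to its immediate predecessor, while $\player$-moves may point anywhere compatible with the parity constraint. Two simplifications follow at once. Since the only base type $X$ is uninterpreted there are no answer moves, so well-bracketing imposes nothing; and since each $\opp$-move points to its predecessor, the $\player$-view of any position is the entire play, so innocence (view-dependence of $\player$'s choices) holds automatically. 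Consequently \emph{finite live strategy on} $\pbacktrack{G(\Delta_T)}$ already means \emph{finite, live strategy in which $\opp$ never backtracks}, with no further hidden condition --- and it is precisely this that makes a clean bijection with normal forms possible.

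Because the recursion descends into subterms with free variables, I would prove a context-indexed statement and read off the theorem as the empty-context case: for $\Gamma = x_1{:}S_1,\ldots,x_m{:}S_m$ and a $\lambdax$ type $T$, the $\eta$-long $\beta$-normal terms $\Gamma \vdash M : T$ are in bijection with finite live strategies for the sequent game $\mathsf{G}(\Gamma\vdash T)$ --- the evident transition-system game in which $\player$'s opening move names the head variable, chosen among the variables of $\Gamma$ together with the $\lambda$-abstractions exposed at the top of $T$, and thereafter $\opp$ may interrogate, as separate lines of play, each of that head's argument positions --- with $\mathsf{G}(\emptyset\vdash T) = \pbacktrack{G(\Delta_T)}$. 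The induction is on the depth of $M$, equivalently the height of the strategy tree, using the standard decomposition: with $T = T_1\to\cdots\to T_n\to X$, an $\eta$-long normal form is $\lambda y_1\ldots y_n.\,h\,M_1\cdots M_k$ with head $h$ of type $U_1\to\cdots\to U_k\to X$ taken from $\Gamma,y_1{:}T_1,\ldots,y_n{:}T_n$, each $M_j$ an $\eta$-long normal form of $U_j$ in that extended context. On the strategy side: $\player$'s first reply is a branch choice (the choice of $h$) carrying a justification pointer to the $\opp$-move that introduced $h$'s binder, after which, for each branch $j\in\{1,\ldots,k\}$ that $\opp$ may play (pointing at $\player$'s move), one has a finite live strategy for $U_j$ in the extended context, namely the data of $M_j$. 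Here liveness corresponds exactly to $\eta$-longness together with being hole-free --- $h$ must be supplied an argument for each of its $k$ positions, hereditarily --- and finiteness of the strategy is finiteness of $M$; so the two sides match term-for-strategy, and showing the maps mutually inverse reduces to checking that $\player$'s pointers transcribe the binding discipline exactly.

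The step I expect to be the real work is that pointer bookkeeping in the inductive clause. Part of the invariant must record which $\opp$-moves of the current play are available as pointer targets and which variable (of $\Gamma$, or $\lambda$-bound) each names; one then checks that every $\player$-move legal in $\pbacktrack{G(\Delta_T)}$ points to such an $\opp$-move whose state has the form $V_1\to\cdots\to V_r\to X$, so that the variable it names has exactly the arity demanded by the branch just chosen and by the arguments $\opp$ will subsequently request, and conversely that every in-scope variable supplies such a legal move --- so that the thread running back through the pointer reconstructs precisely the access path to the binder in the term. Getting the context-indexed invariant strong enough to survive the recursion is the crux; once it is in place, both directions of the bijection and the verification that they compose to the identity are routine structural inductions, with nothing else to verify thanks to the vacuity observations of the first step.
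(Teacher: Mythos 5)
Your proposal is correct and takes essentially the same route as the paper, whose entire proof is the remark that this is ``a routine induction: a restriction of the definability proof in \cite{Hug97}, in turn a variant of the definability proof in \cite{HO}'' --- that is, precisely the Hyland--Ong definability argument you spell out (context-indexed induction on $\eta$-long $\beta$-normal forms, head-variable decomposition, pointers transcribing the binding discipline). Your preliminary observations --- that $\opp$'s forced pointers make the $\pla$-view the whole play, so innocence is automatic, and that liveness and finiteness of the strategy match totality and finiteness of the term --- are exactly what makes the induction ``routine'' in the sense the paper intends.
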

\begin{proof} A routine induction: a restriction of the definability proof in \cite{Hug97}, in turn a variant of the definability proof in \cite{HO}.
\end{proof}
The $\eta$-expanded $\beta$-normal forms $t_n$ of $U\,=\,X\to (X\to
X)\to X$ (whose transition system was depicted above)
are\footnote{$f^n$ denotes $n$ applications of $f$: $\:f^0(x)=x$ and
$f^{n}(x)=f(f^{n-1}(x))$ for $n\ge 1$.}
\begin{displaymath}
\lambda x^X.\,\lambda f^{X\to X}.\,f^n(x)
\end{displaymath}
for $n\ge 0$
and the unique maximal trace of the corresponding live finite
strategy $\tau_n$ 
on $\pbacktrack{G(\Delta_T)}$ is
\begin{center}\vspace{5ex}\begin{math}
\psset{colsep=4ex,nodesep=1.5pt}\begin{psmatrix}
\rnode{1}{1}
&
\rnode{2}{2}
&
\rnode{3}{1}
&
\rnode{4}{2}
&
\rnode{5}{1}
&
\cdots
&
\rnode{8}{2}
&
\rnode{9}{1}
&
\rnode{10}{1}
\end{psmatrix}
\jptr{2}{1}{150}{35}
\jptr{3}{2}{-140}{-40}
\jptr{4}{1}{160}{35}
\jptr{5}{4}{-140}{-40}
\jptr{8}{1}{160}{35}
\jptr{9}{8}{-140}{-40}
\jptr{10}{1}{155}{35}
\end{math}\vspace{1ex}\end{center}
with $n$ occurrences of $2$.
Below we depict this dialogue in the case $n=2$ (corresponding to the
term $\lambda x^X\,.\,\lambda f^{X\to X}\,.\,f(f\,x)$) with its states (as we
did for the chess dialogue on page~\pageref{chessdiagstates}).  It is
easier to display with time running down the page, rather than from
right to left.
\begin{center}\vspace{1ex}
\psset{nodesep=6pt,colsep=5ex,rowsep=6ex}\begin{psmatrix}
\rnode{0}{$\star$}
\\
\rnode{1}{$X\to(X\to X)\to X$}
\\
\rnode{2}{$X\to X$}
\\
\rnode{3}{$X$}
\\
\rnode{4}{$X\to X$}
\\
\rnode{5}{$X$}
\\
\rnode{6}{$X$}
\\
\end{psmatrix}
\nccurve[arrows=<-,angleA=90,angleB=-90]{1}{0}\aput(0.5){1}
\nccurve[arrows=<-,angleA=90,angleB=-90]{2}{1}\bput(0.5){\!2}
\nccurve[arrows=<-,angleA=90,angleB=-90]{3}{2}\aput(0.5){1}
\nccurve[ncurv=.8,nodesepB=6pt,nodesepA=8pt,arrows=<-,angleA=55,angleB=-55,offsetA=-1.2ex,offsetB=-.2ex]{4}{1}\bput(0.5){2}
\nccurve[arrows=<-,angleA=90,angleB=-90]{5}{4}\aput(0.5){1}
\nccurve[ncurv=.8,nodesepB=8pt,,nodesepA=3pt,arrows=<-,angleA=40,angleB=-45,offsetA=-.2ex,offsetB=-1.2ex]{6}{1}\bput(0.5){1}
\vspace{4ex}\end{center}
This notation highlights the similarity with Lorenzen's
dialogues \cite{Lor60,Fel85}.  What we show as states, he referred to as
\emph{assertions}.  

\subsection{The copycat condition}

In this section we introduce the \defn{copycat condition} \cite{Hug97}
on strategies, which is crucial for uniformity (more precisely, for us
to be able to implement uniformity via \emph{copycat expansion}
later).  This condition a slight restriction of a condition of
Lorenzen for dialogue games (listed as condition (D10) in
\cite{Fel85}).  We shall introduce the condition in the context of
lambda calculus games; the generalisation to system $F$ games in the
sequel is immediate.

Extend the set $\lambdax$ of lambda calculus types from the previous
subsection to those generated by implication $\to$ from the ambient set
$\mathsf{Var}$ of system $F$ type variables.  The transition system
$\Delta_T$ associated with a $\lambdax$ type $T$ is defined exactly as
in the previous section, but now in the transitions 
$$T_1\to T_2\to\ldots\to T_n\to
X\hspace{3ex}\transto{i}\hspace{3ex}T_i{_{_{\rule{0ex}{2ex}}}}\hspace*{8ex}$$
$X$ may be \emph{any} type variable in $\mathsf{Var}$.

The \defn{colour} of a transition
$$T_1\to\ldots\to T_n\to
X\hspace{3ex}\transto{i}\hspace{3ex}U_1\to\ldots\to U_m\to Y$$
(where necessarily $T_i\,=\,U_1\to\ldots\to U_m\to Y$) is the rightmost variable
$Y$ in the target.
The colour of a move in a trace of $G(\Delta_T)$ or a dialogue in
$\backtrack{G(\Delta_T)}$ is the colour of the associated transition.
A dialogue in the $\pla$-backtracking game $\pbacktrack{G(\Delta_T)}$
satisfies the \defn{copycat condition} if the colour of every
$\pla$-move (even-index move) is equal to the colour of the preceding
$\opp$-move.\footnote{Lorenzen's condition (D10) required the colour
to be equal to \emph{any} prior $\opp$-move in a $\pla$-backtracking
trace.}  A strategy satisfies the copycat condition if each of its
traces satisfies the copycat condition.

As a simple illustration of the copycat condition, consider the type
$$U\;\;\;=\;\;\; X\to Y\to X$$ whose transition system $\Delta_U$ is
below (only reachable states shown).
\begin{center}\vspace{1ex}\begin{math}
\psset{colsep=-2ex,rowsep=6ex,nodesep=.9ex,labelsep=1pt}\begin{psmatrix}
{} & \rnode{q0}{\dummy} & {} \\
{} & \rnode{q1}{\;\;X\to Y\to X}     & {} \\
\rnode{q2}{X} & {} & \rnode{q3}{Y}
\transbetweena{q0}{q1}{1}{.45}
\transbetweenb{q1}{q2}{1}{.55}
\transbetweena{q1}{q3}{2}{.55}
\end{psmatrix}\end{math}\vspace{2ex}\end{center}
The colour of the top and lower-left transitions is $X$, and the
colour of the lower-right transition is $Y$.
The associated (non-backtracking) game (set of traces) $G(\Delta_U)$ is
$\{ \emptyseq, 1, 2, 11, 12 \}$.
There are two live strategies in the $\pla$-backtracking game
$\pbacktrack{G(\Delta_U)}$, whose maximal traces are as follows, with
the colour of each move shown beneath it in brackets:
\begin{center}\vspace{2ex}\begin{math}
\psset{colsep=4ex,nodesep=1.5pt,rowsep=0ex}\begin{psmatrix}
\rnode{1}{1}
&
\rnode{2}{1}
\\
(X) & (X)
\end{psmatrix}
\jptr{2}{1}{150}{35}
\hspace*{20ex}\begin{psmatrix}
\rnode{1}{1}
&
\rnode{2}{2}
\\
(X) & (Y)
\end{psmatrix}
\jptr{2}{1}{150}{35}
\end{math}\vspace{1ex}\end{center}
The first strategy satisfies the copycat condition, while the second
does not.  The strategies correspond (respectively) to the terms
$$\lambda x^X.\lambda y^Y.x\hspace{23ex}\lambda x^X.\lambda y^Y.y$$
of which only the former is typed correctly as $X\to Y\to X$.  The
second attempts to return $y$ of type $Y$, while the rightmost
variable of $X\to Y\to X$ is $X$.  This corresponds to the failure of
the copycat condition for the second strategy.

The following is a corollary of the theorem above.
\begin{theorem}\label{theoremXY}
Let $T$ be a lambda calculus type generated from the set
$\textsf{Var}$ of system $F$ type variables by implication $\to$.  The
$\eta$-expanded $\beta$-normal terms of type $T$ are in bijection with
finite live strategies on the $\player$-backtracking game $\pbacktrack
{G(\Delta_T)}$ which satisfy the copycat condition.
\end{theorem}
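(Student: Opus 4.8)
The plan is to derive Theorem~\ref{theoremXY} from Theorem~\ref{theoremX} by a ``colour-forgetting'' argument, exploiting the fact that the transition labels of $\Delta_T$ are branch choices $i\in\{1,2,3,\ldots\}$ and so carry no information about which type variables occur in $T$. Write $\underline{T}$ for the $\lambdax$ type over the single base $X$ obtained from $T$ by replacing every variable in $\textsf{Var}$ by $X$. First I would observe that $\Delta_T$ and $\Delta_{\underline{T}}$ have literally the same labels and the same branching: the transition $T_1\to\cdots\to T_n\to Y \transto{i} T_i$ becomes $\underline{T_1}\to\cdots\to\underline{T_n}\to X \transto{i} \underline{T_i}$, the initial transition $\dummy\transto{1}T$ becomes $\dummy\transto{1}\underline{T}$, and underlining never changes the number of top-level arrows of a type, so the set of enabled labels at each reachable state is preserved all the way down. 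Hence $G(\Delta_T)=G(\Delta_{\underline{T}})$ as sets of traces, $\pbacktrack{G(\Delta_T)}=\pbacktrack{G(\Delta_{\underline{T}})}$ as sets of dialogues, and the two games have exactly the same finite live strategies. By Theorem~\ref{theoremX}, these strategies are in bijection with the $\eta$-expanded $\beta$-normal terms of type $\underline{T}$.

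Next I would identify, inside the $\eta$-expanded $\beta$-normal terms of type $\underline{T}$, exactly those that lift to type $T$. An $\eta$-long $\beta$-normal form has a fixed syntactic skeleton (which $\lambda$'s it binds, which earlier binder supplies the head variable at each application node, and how many arguments that head takes); given $T$, this skeleton forces the type of every bound variable and of every head variable to be a determined subtype of $T$, and the term type-checks at $T$ precisely when, at each application node whose expected type is $B_1\to\cdots\to B_m\to W$ with $W\in\textsf{Var}$ and whose head variable has type $E_1\to\cdots\to E_r\to W'$, one has $W'=W$ --- the remaining ``non-rightmost'' constraints $B_j$ being discharged recursively on the argument subterms. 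Thus the map ``erase variables'' from normal forms of type $T$ to normal forms of type $\underline{T}$ is injective, and its image is exactly the set of normal forms all of whose application nodes satisfy this rightmost-variable matching; moreover the lift, when it exists, is unique.

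The heart of the proof is to match this node-by-node rightmost-variable condition with the copycat condition, by re-running the relevant part of the induction behind Theorem~\ref{theoremX} while carrying the colours along. Under that bijection each $\pla$-move of a strategy corresponds to a choice of head variable at some node of the normal form; the state reached by that $\pla$-move is precisely the type of the chosen head variable, so its colour is the rightmost variable $W'$ of that type, while the state of the immediately preceding $\opp$-move is precisely the expected type $B_1\to\cdots\to B_m\to W$ at that node (for the opening move $\dummy\transto{1}T$ one reads off the return variable of $T$ itself), so its colour is $W$. Hence a trace satisfies the copycat condition --- the colour of each $\pla$-move equals the colour of the preceding $\opp$-move --- if and only if $W'=W$ at every corresponding node, i.e.\ if and only if the associated normal form type-checks at $T$. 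Combining the three steps, the Theorem~\ref{theoremX} bijection restricts to a bijection between the finite live copycat strategies on $\pbacktrack{G(\Delta_T)}$ and the $\eta$-expanded $\beta$-normal terms of type $T$.

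I expect the only real obstacle to be bookkeeping: making precise, inside the induction of Theorem~\ref{theoremX}, the identifications ``state of a $\pla$-move $=$ type of the head variable it selects'' and ``state of the preceding $\opp$-move $=$ expected type at that node'', uniformly including the degenerate opening move; once these are in place the colour/matching equivalence is immediate. An alternative that avoids re-opening that induction is to build the bijection directly by the same recursion, but on $T$-typed rather than $\underline{T}$-typed normal forms, invoking the copycat condition exactly where the earlier proof needed no hypothesis --- this is the ``routine induction'' form of the argument.
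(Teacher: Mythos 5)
Your proposal is correct and follows essentially the route the paper intends: the paper gives no explicit proof, merely remarking that Theorem~\ref{theoremXY} is a corollary of Theorem~\ref{theoremX}, and your colour-forgetting argument (the games over $T$ and its single-variable collapse coincide, and the copycat condition carves out exactly the normal forms whose head variables return the right rightmost variable) is precisely the derivation the paper illustrates with its $X\to Y\to X$ example. The bookkeeping you flag --- identifying the state of a $\pla$-move with the type of the selected head variable and the preceding $\opp$-state with the expected type --- is indeed the content of the induction behind Theorem~\ref{theoremX}, so nothing further is needed.
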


\subsection{Remarks on Hyland-Ong arenas}

\emph{This section is for readers familiar with Hyland-Ong
games \cite{HO}.  It can be skipped without loss of continuity.}

The set of non-empty traces of a transition system forms a forest
under the prefix order, and is therefore an \defn{arena} in the sense
of Hyland and Ong \cite{HO}.
Write $\arena(\Delta)$ for the arena of a transition system $\Delta$,
and for a lambda calculus type $T$ abbreviate $\arena(\Delta(T))$ to
$\arena(T)$.  
The following arena isomorphism is immediate:
\begin{displaymath}
\begin{array}{r@{\;\;\;\;\;\cong\;\;\;\;\;}l}
\arena(T\to U) & \mathsf{A}(T)\Rightarrow \arena(U) \\[1ex]
\end{array}
\end{displaymath}
where $\Rightarrow$ is the Hyland-Ong function space operation on
arenas and $\cong$ is isomorphism of forests.

Elements of these arenas are sequences (traces), and therefore Hyland-Ong dialogues in them suffer
some redundancy, as in (for example) 
\begin{center}\vspace{4ex}\begin{math}
\psset{nodesep=2pt,colsep=4ex,nodesep=1.5pt}\begin{psmatrix}
\rnode{1}{a}
&
\rnode{2}{ab}
&
\rnode{3}{abc}
&
\rnode{4}{ab'}
&
\rnode{5}{abc'}
&
\rnode{6}{abcd}
&
\rnode{7}{abcde}
&
\rnode{8}{abcd'}
\end{psmatrix}
\jptr{2}{1}{150}{35}
\jptr{3}{2}{-140}{-40}
\jptr{4}{1}{150}{35}
\jptr{5}{2}{-140}{-40}
\jptr{6}{3}{150}{30}
\jptr{7}{6}{-140}{-40}
\jptr{8}{3}{150}{30}
\end{math}\vspace{4ex}\end{center}
in the arena generated by a transition system with transition labels
$a,b,b',c,c',d,d'$, whose traces include $abcde$, $abcd'$, $abc'$,
\etcet\
Clearly, one can abbreviate this trace to
\begin{center}\vspace{4ex}\begin{math}
\psset{nodesep=2pt,colsep=4ex,nodesep=1.5pt}\begin{psmatrix}
\rnode{1}{a}
&
\rnode{2}{b}
&
\rnode{3}{c}
&
\rnode{4}{b'}
&
\rnode{5}{c'}
&
\rnode{6}{d}
&
\rnode{7}{e}
&
\rnode{8}{d'}
\end{psmatrix}
\jptr{2}{1}{150}{35}
\jptr{3}{2}{-140}{-40}
\jptr{4}{1}{150}{35}
\jptr{5}{2}{-140}{-40}
\jptr{6}{3}{150}{30}
\jptr{7}{6}{-140}{-40}
\jptr{8}{3}{150}{30}
\end{math}\vspace{4ex}\end{center}
eliminating the redundancy.  This is how we have opted to formalise
the backtracking games over transition systems in the previous
subsections.  Note, however, this notational difference is trivial,
and in spirit they are essentially Hyland-Ong arena/dialogue games.  The
notation is simply taylored towards arenas whose forests are described
as sets of traces, rather than partial-order forests as used
originally by Hyland and Ong
\cite{HO}.
Since our games are Hyland-Ong games, and we have the isomorphism
relating syntactic $\to$ with arena $\Rightarrow$ above, we obtain
composition (hence a category) as standard Hyland-Ong
composition of innocent strategies.

In the next section we extend the lambda calculus transition systems
to system $F$ transition systems.  The following arena (forest)
ismorphisms will then hold:
\begin{displaymath}
\begin{array}{r@{\;\;\;\;\;\cong\;\;\;\;\;}l}
\arena(T\to U) & \mathsf{A}(T)\Rightarrow \arena(U) \\[1ex]
\arena(\forall X.T) & \prod_{\mathsf{Types }\,U}\arena(T[U/X])
\end{array}
\end{displaymath}
The arena-product $\prod$ (disjoint union of forests) is taken over
all system $F$ types.  Composition in our system $F$ model is simply
Hyland-Ong composition.

\section{System $F$ games (hypergames)}

We extend the lambda calculus transition systems defined above to all
of system $F$.  States will be types, as before, and a transition will
remain a branch choice $i\ge 1$, but now together with some types
to instantiate quantifiers.  We begin by precisely defining
quantifier instantiation.

Recall that a \defn{prenex type} is a type in which all quantifiers have
been pulled to the front by exhaustively applying the
rewrite $$T\to
\forall X.U\;\;\;\leadsto\;\;\; \forall X\,.\,T\to U$$
throughout the type.\footnote{Without loss of generality, in the rewrite assume $X$ is not free in
$T$.}
Thus a type is prenex if and only if it has the form
$$\forall X_1\,.\forall X_2\,.\,\cdots\,\forall X_m\,.\:T_1\to T_2\to \ldots\to T_n\to X$$ 
for prenex types $T_i$ and type variables $X$ and $X_j$.
Write $T[V/X]$ for the result of substituting the type $V$ for
the free variable $X$ throughout the prenex type $T$, and (if
necessary) converting to prenex form.  For example 
$$(X\to X)[\forall Y.Y/X]\;\;\;\;=\;\;\;\;\forall Y.\:(\forall Y'.Y')\to Y$$
via: $$(X\to X)[\forall Y.Y/X]\;\;\;\overset{\text{substitute}}{\leadsto}\;\;\; (\forall Y.Y)\to (\forall Y.Y)
\;\;\;\overset{\text{prenex}}{\leadsto}\;\;\;\forall Y.(\forall Y'.Y')\to Y\,.$$
Define $$\forall X\,.T\:\import\: V\;\;\;\;=\;\;\;\; T[V/X]$$ called
the result of \defn{importing} $V$ into $\forall X.T$.  For example, 
$$\forall X\,.\,X\to X\;\import\; \forall Y.Y\;\;\;\;=\;\;\;\;\forall Y.\:(\forall Y'.Y')\to Y\,.$$
Write $T\import V_1V_2\ldots V_n$ for the iterated importation
$(\ldots((T\import V_1)\import V_2)\import\ldots)\import V_n$, when 
defined.  For example, 
$$\forall X\,.\,X\to X\;\import\;(\forall Y.Y)\;(Z\to Z)\;\;\;=\;\;\;(\forall Y.Y)\to Z\to Z\,.$$

A prenex type is \defn{resolved} if it has no outermost quantifier,
\ie, 
it has the form $$T_1\to T_2\to \ldots\to T_n\to X\,,$$
a form which we shall often abbreviate to
$$T_1T_2\ldots T_n\;\;\to\;\;X$$
Each $T_i$ is called a \defn{branch}.
If $T\import U_1\ldots U_n$ is resolved,
we say that $U_1\ldots U_n$ \defn{resolves} $T$
to $T\import U_1\ldots U_n$.
For example, we saw above that $(\forall Y.Y)(Z\to Z)$ resolves
$\forall X\,.\,X\to X$ to $(\forall Y.Y)\to Z\to Z$.

Define the transition system $\Delta_T$ of a prenex type $T$ as follows:
\begin{itemize}
\item 
States are resolved prenex types, with an additional initial state
$\dummy$\,.\footnote{Prenex types were drawn graphically in
\cite{Hug97,Hug00}, in a manner akin to B\"ohm trees, and called
\emph{polymorphic arenas}.}
\item
A label is a pair $\langle i,V_1\ldots V_k\rangle$ where $i\ge 1$ is a
\defn{branch choice}, $k\ge 0$ and each $V_i$ is a type, called an \defn{import}.
\item Transitions.  A $1$-labelled transition
$$\dummy\hspace{1ex}\transto{1}\hspace{1ex}T$$ from the initial state
to $T$, and transitions
$$\rule{0ex}{3ex}T_1T_2\ldots T_n\to X\;\;\;\;\;\;\;\transto{\langle
i,V_1\ldots V_k\rangle}\;\;\;\;\;\;\;U_1U_2\ldots U_m\to Y$$ whenever $1\le i\le n$
and
$$T_i\import V_1\ldots V_k\;\;\;=\;\;\;U_1U_2\ldots U_m\to Y$$
(Thus a transition chooses a branch $T_i$ and resolves it to
form the next state.)
\end{itemize}
More generally, the transition system of a type is the transition system of its prenex form.

An example transition is shown below.
$$(X'\to X'\to X')\,\to\,(\forall X.X\to X)\,\to\,X''
\hspace*{10ex}
\transto{\langle 2,(\forall Y.Y)(Z_1\!\to\! Z_2\!\to\! Z)\rangle}
\hspace*{10ex}
(\forall Y.Y)\,\to\,Z_1\,\to\, Z_2\,\to Z$$
The branch choice $2$ selects the branch $\forall X.X\to X$ and the
imports $\forall Y.Y$ and $Z_1\!\to\! Z_2\!\to Z$ resolve this branch
to form the next state.\footnote{To obtain a category with products,
we extend system $F$ with products, and allow
import/resolution/substition with products.}

\subsection{Implicit prenexification}\label{lazy}

Prenexification is a lynchpin of the hypergames approach \cite{Hug97}:
it is critical to the dynamics of hypergames that in a type
$T\to \forall X.U$
the quantifier $\forall X$ is available for instantiation.
Whether we make the prenexifications $T\to\forall
X.U\;\leadsto\;\forall X.T\to U$ explicit during play or not is
optional.  
We can just as well leave prenexification implicit, by formally
designating $\forall X$ as available for instantiation in
$T\to\forall X.U\,$.

A quantifier $\forall X$ in a type $T$ is \defn{available} if $T$ has
any of the following forms:\footnote{We assume without loss of
generality throughout this section that all bound variables are
distinct from one another and from the free variables.}
\begin{itemize}
\item $T\,=\,\forall X.U$
\item $T\,=\,U\to T'$ and $\forall X$ is available in $T'$
\item $T\,=\forall Y. T'$ and $\forall X$ is available in $T'$.
\end{itemize}
For example, $\forall X$ and $\forall Y$ are available in 
$\forall Y.(\forall Z.Y\to Z)\to \forall X.X$, 
but $\forall Z$ is
not.\footnote{Note that $\forall X$ is available in $T$ iff it is one
of the outermost quantifiers in the prenex form $\widetilde T$ of $T$
(\ie, $\widetilde T\,=\,\forall X_1\ldots \forall X_k.U$ and $X$ is
among the $X_i$).  In this sense, prenexification is implicit, or
``lazy'': from a behavioural point of view, we're still working with
prenex types.}

Type resolution and importation are tweaked in the obvious way, as follows.
A type is \defn{resolved} if it has no available quantifier, \ie, if
it has the form 
$$T_1\to T_2\to \ldots\to T_n\to X\;\;\;\;\;=\;\;\;\;\;T_1\ldots T_n\to X$$ for
$n\ge 0$ and types $T_i$, called \defn{branches}.  (All we have done is drop the requirement that the $T_i$ be prenex.)
Let $\forall X$ be the leftmost available quantifier in a type $T$,
and let $T^X$ be the result of deleting $\forall X$ from $T$ (\eg\ if
$T\,=\,U\to \forall X.V$ then $T^X\,=\,U\to V$).
Define $$T\;\import\;V\;\;\;=\;\;\; T^X[V/X]\,,$$ the result of
\defn{importing} a type $V$ into $T$, and define \defn{iterated 
importation} $T\,\import\,V_1\ldots V_n$ as before.

The (lazy style) transition system $\Delta_T$ of a system $F$ type
remains essentially unchanged:
\begin{itemize}
\item 
States are system $F$ types, with an additional initial state
$\dummy$\,.
\item
A label is a pair $\langle i,V_1\ldots V_k\rangle$ where $i\ge 1$ is a
\defn{branch choice}, $k\ge 0$ and each $V_i$ is a type, called an \defn{import}.
\item Transitions.  A $1$-labelled transition
$$\dummy\hspace{1ex}\transto{1}\hspace{1ex}T$$ from the initial state
to $T$, and transitions
$$\rule{0ex}{3ex}T_1T_2\ldots T_n\to X\;\;\;\;\;\;\;\transto{\langle
i,V_1\ldots V_k\rangle}\;\;\;\;\;\;\;U_1U_2\ldots U_m\to Y$$ whenever $1\le i\le n$
and
$$T_i\import V_1\ldots V_k\;\;\;=\;\;\;U_1U_2\ldots U_m\to Y$$
\end{itemize}

\section{Black box characterisation of system $F$ terms}

A \defn{black box importation} is an importation of the form
$$\forall X.T\;\import\; X\;\;\;\;\;=\;\;\;\;\; T\,,$$
simply deleting the quantifier.  Thus the bound variable $X$ becomes
free.  We refer to $X$ as a \defn{black box}.  (We continue to assume,
without loss of generality, that within a type all bound variables are
distinct from one another and from all free variables.)
Let $T$ be a closed\footnote{No free variables.} system $F$ type and $d$ a dialogue in the
$\pla$-backtracking game $\pbacktrack{G(\Delta_T)}$.
The first player $\opp$ \defn{imports black boxes} in $d$ if every
importation associated with $\opp$ in $d$ is a black box importation,
and the second player $\pla$ \defn{respects black boxes} in $d$ if
every import associated with $\pla$ takes its free variables among the
black boxes imported hitherto by $\opp$.  A dialogue in which $\opp$
imports black boxes and $\pla$ respects them is a \defn{black box
dialogue}.
The \defn{black box game} $\bbgame{G(\Delta_T)}$ is the
restriction of the $\pla$-backtracking game $\pbacktrack{G(\Delta_T)}$
to black box dialogues.

The copycat condition extends from the lambda calculus case to system
$F$ in the obvious way: the colour of a transition is once again the
rightmost variable of the target.
\begin{theorem}\label{theoremBB}
The $\eta$-expanded $\beta$-normal terms of a closed system $F$ type $T$ are
in bijection with finite live strategies on the black box game
$\bbgame{G(\Delta_T)}$ which satisfy the copycat condition.
\end{theorem}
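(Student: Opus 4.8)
The plan is to extend the definability argument behind Theorem~\ref{theoremXY} (itself an extension of Theorem~\ref{theoremX}, from \cite{Hug97}, a variant of \cite{HO}) to cope with quantifiers. Since $\eta$-expanded $\beta$-normal forms are unique canonical representatives, it suffices to build, by induction on the \emph{size of the term} (equivalently, of the strategy --- one cannot induct on the type, as $\pla$'s type instantiations may enlarge it), a bijection matching syntactic with strategic structure. Write a closed prenex type as $T=\forall X_1\ldots\forall X_m.\,T_1\to\cdots\to T_n\to X$. An $\eta$-long $\beta$-normal term of type $T$ necessarily has the form
$$\Lambda X_1\ldots\Lambda X_m.\;\lambda x_1^{T_1}\ldots\lambda x_n^{T_n}.\;x_i\,U_1\ldots U_k\,N_1\ldots N_\ell$$
where $x_i$ is one of the term variables, $U_1\ldots U_k$ instantiate the available quantifiers of $T_i$, the head $x_i\,U_1\ldots U_k$ has type $T_i\import U_1\ldots U_k=S_1\to\cdots\to S_\ell\to Y$, and each $N_j$ is again an $\eta$-long $\beta$-normal term, of type $S_j$ in the context $x_1{:}T_1,\ldots,x_n{:}T_n$ with $X_1,\ldots,X_m$ free. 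Well-typedness imposes exactly two extra constraints: (i) $Y=X$, since the body must carry the ambient return type; and (ii) the free type variables of each $U_r$ lie among $X_1,\ldots,X_m$, the only type variables in scope.

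On the game side, a play of $\bbgame{G(\Delta_T)}$ opens with $\opp$ resolving $T$'s outermost quantifier block by a single simultaneous import of fresh black boxes $X_1,\ldots,X_m$ (forced: the ensuing state must be resolved for $\pla$ to reply), reaching $T_1\cdots T_n\to X$. A live strategy answers with a move $\langle i,V_1\ldots V_k\rangle$ --- a branch choice $i$ and a tuple of imports --- landing in the resolved state $T_i\import V_1\ldots V_k=S_1\cdots S_\ell\to Y$. This is precisely the data $(x_i;U_1,\ldots,U_k)$ above, under the dictionary: branch choice $\leftrightarrow$ head variable, imports $\leftrightarrow$ type arguments, $\Lambda$-bound variables $\leftrightarrow$ $\opp$'s black boxes, body return variable $\leftrightarrow$ colour of moves. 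The copycat condition says the colour $Y$ of $\pla$'s move equals the colour $X$ of the preceding $\opp$-move --- exactly constraint~(i); and ``$\pla$ respects black boxes'' says the free variables of the imports $V_r$ lie among the black boxes imported so far --- exactly constraint~(ii). (``$\opp$ imports black boxes'' carries no term-level content beyond the harmless convention that the $\Lambda$-binders are distinct and fresh.) From the state $S_1\cdots S_\ell\to Y$, $\opp$ may pick any branch $j$ and resolve $S_j$ by a simultaneous black-box import; the induced sub-strategy is a strictly smaller finite live copycat strategy on the game of $S_j$ over the enlarged context and black-box pool, hence by the induction hypothesis equals the strategy of a unique $\eta$-long $\beta$-normal $N_j$ of type $S_j$. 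Assembling $x_i\,U_1\ldots U_k\,N_1\ldots N_\ell$ under the $\Lambda$'s and $\lambda$'s yields the term; conversely, reading off the head, imports, and (recursively) the argument subterms recovers it. Well-definedness of the forward map --- the constructed dialogue set is a finite live $\pla$-backtracking black-box dialogue respecting $G(\Delta_T)$ and satisfying copycat --- follows step by step from a typing derivation; injectivity follows from uniqueness of $\eta$-long $\beta$-normal form; surjectivity is the induction just sketched.

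To run the induction one must, as in \cite{HO,Hug97}, generalise the statement from closed types to judgements $x_1{:}T_1,\ldots,x_n{:}T_n\vdash N:S$ with $X_1,\ldots,X_m$ declared free --- equivalently, work with the arenas $\arena(T)$ and the isomorphisms
$$\arena(T\to U)\;\cong\;\arena(T)\Rightarrow\arena(U)\qquad\text{and}\qquad\arena(\forall X.U)\;\cong\;\textstyle\prod_V\arena(U[V/X])$$
together with the standard classification of innocent Hyland--Ong strategies by their (B\"ohm-tree-shaped) view functions. The main obstacle is purely bookkeeping: verifying that at every node of the B\"ohm tree the set of type variables legitimately usable in a $\pla$-import coincides exactly with the set of $\opp$-black-boxes on the path to that node (i.e.\ the enclosing $\Lambda$-binders), and dually that $\opp$ is forced to discharge each quantifier block by one simultaneous black-box import. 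Once this scope-tracking is in place, the argument is a routine adaptation of the lambda-calculus case of Theorem~\ref{theoremXY}, the copycat condition supplying the return-type constraint and ``respecting black boxes'' supplying the scoping constraint.
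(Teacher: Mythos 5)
Your proposal is correct and takes essentially the same route as the paper, whose proof of Theorem~\ref{theoremBB} simply invokes the definability argument of \cite{Hug97}: an induction on the size of the $\eta$-long $\beta$-normal form (equivalently, of the finite strategy), matching head variable with branch choice, type arguments with $\pla$-imports, the return-type constraint with the copycat condition, and the scoping of type arguments with respect for black boxes. The generalisation to open judgements and the scope-tracking you identify as the main bookkeeping burden are precisely the content of that cited proof, just as the lambda-calculus cases (Theorems~\ref{theoremX} and~\ref{theoremXY}) are its restriction.
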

\begin{proof}
The definability proof in \cite{Hug97}.
\end{proof}

\section{Uniformity by copycat expansion}

The black box game is highly unsymmetric:
\begin{itemize}
\myitem{1} $\pla$ can backtrack, while $\opp$ cannot.
\myitem{2} $\pla$ is subject to the copycat condition, while $\opp$ is not.
\myitem{3} $\opp$ can only import black boxes (free variables); $\pla$ can import 
arbitrary types, so long as their free variables are prior black
boxes.
\end{itemize}
To compose strategies we must symmetrise the game, so that $\opp$ and
$\pla$ can interact.

A symmetrisation of backtracking (1) was obtained by Coquand
\cite{Coq91,Coq95}.  A shared history of two asymmetric strategies is
built, in which both players backtrack.  Each time either asymmetric
strategy plays a move, it can only see a projection of the shared
history in which the opposing player does not backtrack.
This interaction was made lambda-calculus specific by Hyland-Ong
\cite{HO}, who called the projections \emph{views} and called the symmetrised
strategies \emph{innocent}.

Symmetrising the copycat condition (2) will be automatic, coming as a
simple side effect of the views: we simply demand that, in their
respective views, both strategies adhere to the copycat condition.

We shall symmetrise with respect to black boxes (3) via the notion of
\emph{copycat expansion} \cite{Hug06h} recalled below.\footnote{Copycat
expansion was implicit in \cite{Hug00}, occuring during interaction.
In \cite{Hug06h} it was made explicit, being applied to the strategies
prior to ineraction, rather just during interaction.}

Symmetrising (1) yields interaction for lambda calculus over a single
base type symbol.  Symmetrising (1) \& (2) yields interaction for
lambda calculus over a set of base type symbols.  Symmetrising
(1)---(3) yields interaction for system $F$.
\begin{center}
\begin{tabular}{|l|c|}
\hline
\bf Symmetrising & \bf yields interaction for \\ \hline
(1) & $\lambda$, single base type \\ \hline
(1) \& (2) & $\lambda$, set of base types \\ \hline
(1) \& (2) \& (3) & system $F$ \\ \hline
\end{tabular}
\end{center}
We shall refer to the copycat condition together with copycat
expansion as \defn{uniformity}.
A visual summary of the symmetrisation is below, where $T$ is a system $F$ type.
\begin{center}\vspace*{1ex}%
\Rnode{b}{$\bbgame{G(\Delta_T)}$}
\hspace*{12ex}
\Rnode{p}{$\pbacktrack{G(\Delta_T)}$}
\hspace*{12ex}
\Rnode{s}{$\backtrack{G(\Delta_T)}$}%
\ncline[nodesep=.5ex,arrows=->]{b}{p}\naput{uniformity}%
\ncline[nodesep=.5ex,arrows=->]{p}{s}\naput{innocence}%
\end{center}
An arrow here indicates how a strategy on the left lifts to a strategy on the right.

\subsection{Symmetrising black boxes via copycat expansion}

Let $T$ be a closed system $F$ type and $d$ a dialogue in the
$\pla$-backtracking game $\pbacktrack{G(\Delta_T)}$ which satisfies
the copycat condition.
Let $X$ be a black box in $d$, let $U$ be a type, and define $d[U/X]$
as the result of substituting $U$ for free occurrences of $X$ in the
imports of $d$.

\begin{center}
\Large\framebox{To be continued\ldots}
\end{center}

\small
\bibliographystyle{myalphaams}
\bibliography{main}

\end{document}